\setlist[enumerate]{itemsep=0mm,parsep=2mm,topsep=5pt,label=\textit{(\alph*)}}
\setlist[itemize]{itemsep=1mm,parsep=2mm,topsep=6pt}
\newcommand\myshade{85}
\colorlet{myurlcolor}{Aquamarine}
\crefname{equation}{}{}
\crefname{appsec}{Appendix}{Appendices}
\def\normaledge{1.2}
\definecolor{edgeblack}{rgb}{0.25,0.25,0.25}
\definecolor{vertexblack}{rgb}{0.2,0.2,0.2}
\theoremstyle{definition}
\newtheorem{theorem}{Theorem}[section]
\newtheorem{lemma}[theorem]{Lemma}
\newtheorem*{lemma*}{Lemma}
\newtheorem*{conjecture*}{Conjecture}
\newtheorem*{lemma''*}{``Lemma''}
\newtheorem{claim}[theorem]{Claim}
\newtheorem*{claim*}{Claim}
\newtheorem{corollary}[theorem]{Corollary}
\newtheorem{conjecture}[theorem]{Conjecture}
\newcommand{\RR}{\mathbb{R}}
\newcommand{\EE}{\mathbb{E}}
\newcommand{\E}{\mathbb{E}}
\newcommand{\Prob}{\mathbb{P}}
\newcommand{\PP}{\mathbb{P}}
\newcommand{\degpi}{\deg_{G,\overleftarrow{\pi}}}
\newcommand{\degGpi}{\deg_{G_\pi^D,\overleftarrow{\pi}}}
\newcommand{\Npi}{N_{G,\overleftarrow{\pi}}}
\newcommand{\binomialdist}[1]{{\rm Bin}(n,#1)}
\newcommand{\rigmatroidt}{\mathcal{R}_d^t}
\newcommand{\graphicrigid}{\mathcal{M}_d}
\newcommand{\D}{a}
\newcommand{\DD}{{\vec{G}}}
\title{\bf Highly connected orientations from edge-disjoint rigid subgraphs}
\author[1,2]{Dániel Garamvölgyi} 
\author[1,3]{Tibor Jordán}
\author[1]{Csaba Király} 
\author[1,3]{Soma~Villányi} 
\affil[1]{\footnotesize HUN-REN-ELTE Egerváry Research Group on Combinatorial Optimization, Pázmány~Péter~sétány~1/C, Budapest, 1117, Hungary}
\affil[2]{\footnotesize HUN-REN Alfréd Rényi Institute of Mathematics, Reáltanoda utca 13-15, Budapest, 1053, Hungary}
\affil[3]{\footnotesize Department of Operations Research, ELTE Eötvös Loránd University, Pázmány~Péter sétány~1/C, Budapest, 1117, Hungary}
\affil[ ]{\footnotesize \textit{E-mail addresses:} {\tt \{daniel.garamvolgyi,tibor.jordan,csaba.kiraly,soma.villanyi\}@ttk.elte.hu}}
\date{}
\begin{document}

\maketitle

\begin{abstract}
    We give an affirmative answer to a long-standing conjecture of Thomassen, stating that every sufficiently highly connected graph has a $k$-vertex-connected orientation. We prove that a connectivity of order $O(k^2)$ suffices. As a key tool, we show that for every pair of positive integers $d$ and $t$, every $(t \cdot h(d))$-connected graph contains $t$ edge-disjoint $d$-rigid (in particular, $d$-connected) spanning subgraphs, where $h(d) = 10d(d+1)$. This also implies a positive answer to the conjecture of Kriesell that every sufficiently highly connected graph $G$ contains a spanning tree $T$ such that $G-E(T)$ is $k$-connected. 
\end{abstract}

\section{Introduction}

It follows from a classical theorem of Nash-Williams that every $2k$-edge-connected graph has a $k$-arc-connected orientation. In 1985, Thomassen asked whether a similar statement is true for vertex-connectivity.

\begin{conjecture}\cite[Conjecture 10]{thomassen_1989}\label{conjecture:thomassen}
For every positive integer $k$ there exists a (smallest) integer $f(k)$ such that every $f(k)$-connected graph has a $k$-connected orientation.
\end{conjecture}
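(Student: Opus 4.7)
The plan is to derive Thomassen's conjecture from the main structural result announced in the abstract, namely that every $(t \cdot h(d))$-connected graph contains $t$ edge-disjoint $d$-rigid spanning subgraphs, with $h(d) = 10d(d+1)$. I would first calibrate parameters: choose $d = \Theta(k)$ and $t = \Theta(k^{3})$ so that $t \cdot h(d) = \Theta(tk^{2}) = O(k^{5})$, and take this quantity as $f(k)$. Given a graph $G$ of this connectivity, the structural theorem immediately yields edge-disjoint $d$-rigid (and hence $d$-connected) spanning subgraphs $H_{1}, \dots, H_{t}$ of $G$.

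The heart of the argument is then to orient $G$ in a $k$-vertex-connected way using these subgraphs. The natural attempt is to orient each $H_{i}$ separately, exploiting that $d$-rigidity is far stronger than plain $d$-vertex-connectivity (it provides matroidal redundancy, high local edge density, and a Henneberg/ear-decomposition structure), and then to combine these orientations into a single orientation $\vec G$ of $G$. The goal is that for every vertex set $X$ with $|X| \leq k-1$ and every pair $u,v \in V \setminus X$, some $\vec H_{i}$ contributes a directed $u$--$v$ path in $\vec G - X$. Because the $H_{i}$ are edge-disjoint and numerous, we get a lot of independent "chances" for such a path to survive any prescribed small cut, and because each $H_{i}$ is rigid we get strong local guarantees rather than merely strong global edge-count.

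The main obstacle is precisely this orientation step. Vertex-connectivity of orientations is notoriously subtle: the analogue of Nash-Williams' theorem for vertex-connectivity is still conjectural (Frank's conjecture, settled only for $k \leq 2$ by Jordán), so one cannot simply translate an edge-connectivity condition on $G$ into a $k$-vertex-connected orientation. The additional structural information provided by $d$-rigidity — well beyond $d$-vertex-connectivity — must therefore be used essentially. Concretely, I expect the argument to rely on a matroidal orientation lemma that converts a $d$-rigid spanning subgraph into a partial orientation whose "deficiencies" on small vertex sets can be quantified, and then to show that combining $t = \Theta(k^{3})$ such partial orientations prevents any deficiency from accumulating at a single $(k-1)$-cut.

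A secondary issue, but a cleaner one, is the parameter bookkeeping: one must verify that the $d = \Theta(k)$ coming out of the orientation step really is enough (rather than, say, $d = \Theta(k^{2})$), since this is what keeps the final bound at $O(k^{5})$ rather than higher. Once the orientation lemma is in place, this reduces to a routine computation using $h(d) = 10d(d+1)$, and Conjecture~\ref{conjecture:thomassen} follows with $f(k) = O(k^{5})$.
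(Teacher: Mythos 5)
You have correctly identified the high-level architecture -- use the rigid-subgraph packing theorem with $d=\Theta(k)$ and $t=\Theta(k^3)$ and then solve an orientation problem for the resulting edge-disjoint subgraphs -- and this is indeed the route the paper takes. However, your proposal leaves the orientation step as a black box (``I expect the argument to rely on a matroidal orientation lemma\dots''), and the heuristic you offer for it (many edge-disjoint subgraphs giving ``independent chances'' for a path to survive a small cut) is not how the actual argument works and would be difficult to make precise: a single vertex cut of size $k-1$ simultaneously affects \emph{all} $t$ subgraphs, so edge-disjointness alone does not give independence in any usable sense.

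The genuine content you are missing is the mechanism by which $d$-rigidity is converted into a vertex-connectivity guarantee for the orientation. The paper fixes a distinguished set $R$ of $\binom{d+1}{2}$ vertices and uses Hakimi's degree-specified orientation theorem, together with the sparsity count $i_G(X)\leq d|X|-\binom{d+1}{2}$, to show that every minimally $d$-rigid graph admits a ``$(d,R)$-orientation'' in which every vertex outside $R$ has in-degree exactly $d$ and every vertex of $R$ has in-degree $d-1$. The key counting lemma (\cref{l2}) then shows that, for $d\geq 2k-2$, any set $X$ with $X\cap R=\emptyset$ has at least $k$ in-neighbours in such an orientation. Two of the $t$ packed subgraphs are given a $(d,R)$-orientation and a reversed $(d,R)$-orientation, which handles all potential cuts $W$ with $|W|\leq k-1$ provided one side of the cut avoids $R$. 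The remaining $\approx\frac{k}{2}\binom{d+1}{2}$ subgraphs play a completely different role: they are used, via Menger's theorem inside each $2k$-connected subgraph, to realize the edges of a fixed $k$-connected, nearly $k$-regular auxiliary graph on $R$ as bundles of $k$ internally disjoint directed paths in each direction, ensuring that $R$ stays pairwise strongly connected after deleting any $k-1$ vertices. The final case analysis shows that one side of any purported cut misses $R$ and is thus eliminated by the $(d,R)$-orientation lemma. So the subgraphs are not interchangeable ``chances''; they have two structurally distinct jobs (degree specification vs.\ path wiring), and the interaction through the auxiliary set $R$ is precisely what your proposal does not supply.
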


\cref{conjecture:thomassen} has a long history. It is well-known that if $f(k)$ exists, then $f(k) \geq 2k$. Thomassen, together with Jackson, also posed the stronger conjecture that $f(k) = 2k$ (\cite[Conjecture 11]{thomassen_1989}), and later Frank gave the even stronger conjecture that a graph has a $k$-connected orientation if and only if it remains $(2k-2j)$-edge-connected after the deletion of any set of $j<k$ vertices (\cite[Conjecture 7.8]{frank_network}).

The $k = 1$ case of Frank's conjecture is the well-known theorem of Robbins \cite{robbins_1939}; this implies that $f(1) = 2$. The $k=2$ case of \cref{conjecture:thomassen} was proved by the second author \cite{jordan_2005} by showing that $f(2) \leq 18$. Subsequently, Thomassen \cite{thomassen_2015} proved the $k=2$ case of Frank's conjecture, hence establishing that $f(2) = 4$. However, Durand de Gevigney \cite{duranddegevigney_2020} recently disproved Frank's conjecture for $k \geq 3$. He also showed that for such $k$, deciding whether a graph has a $k$-connected orientation is NP-hard.

In this paper we give an affirmative answer to \cref{conjecture:thomassen}, for all $k$, by showing that $f(k) = O(k^2)$.

\begin{theorem}\label{theorem:main2}
    Every $(320 \cdot k^2)$-connected graph has a $k$-connected orientation.
\end{theorem}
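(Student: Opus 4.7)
The plan is to prove \cref{theorem:main2} by combining the edge-disjoint rigid subgraph theorem announced in the abstract with an orientation result tailored to rigid graphs.

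\textbf{Step 1 (Decomposition).} Apply the main structural theorem with parameters $d$ and $t$ chosen polynomially in $k$ so that $t \cdot h(d) = 10\, t\, d(d+1) = O(k^5)$. A natural choice is $d = \Theta(k^2)$ and $t = \Theta(k)$; then $d$ controls the ``local'' connectivity of each rigid piece while $t$ provides enough edge-disjoint pieces to patch together. This yields $t$ edge-disjoint $d$-rigid spanning subgraphs $G_1, \ldots, G_t$ of $G$.

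\textbf{Step 2 (Orienting each rigid subgraph).} Establish a lemma stating that every $d$-rigid graph admits an orientation with a strong local connectivity property -- for instance, that for any chosen root $r$ there is an orientation in which $r$ reaches every other vertex via $\Omega(\sqrt{d})$ (or, with luck, $\Omega(d)$) internally vertex-disjoint directed paths. The count-matroid structure of rigidity gives strong partition-connectivity of the underlying graph, which through an Edmonds-type branching packing (or its vertex-disjoint analogue) should translate into such rooted-connected orientations. One could also try to exploit an ear-decomposition of a $d$-rigid graph to build the orientation incrementally.

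\textbf{Step 3 (Combining).} Orient each $G_i$ using Step~2, with roots distributed so that every ordered pair of vertices is ``served'' by sufficiently many of the subgraph orientations; orient the leftover edges of $G$ arbitrarily. Verify $k$-vertex-connectivity of the resulting orientation $\vec{G}$ via Menger's theorem: for every vertex pair $(u,v)$ and every vertex set $S$ of size $<k$, argue that the contributions of the subgraphs whose distinguished roots or rigid structure survive the removal of $S$ together produce a directed $u$-to-$v$ path in $\vec{G}-S$. The edge-disjointness of the $G_i$ is crucial here, since it lets the orientations be chosen independently.

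\textbf{Main obstacle.} The hard part is Step~2 and its use in Step~3. Vertex-connectivity of orientations is notoriously delicate: Durand de Gevigney's counterexample refutes Frank's conjecture and shows that even deciding $k$-connected orientability is NP-hard for $k \geq 3$, so no clean Nash-Williams-style characterisation is available. The proof must therefore use the rigidity matroid structure in an essential, non-black-box way -- turning geometric/combinatorial rigidity into rooted-connected orientations that combine cleanly -- and this is where most of the work should go. Parameter bookkeeping (matching the out-degree gained per subgraph against the connectivity lost to vertex deletions) is what ultimately forces the $k^5$ bound rather than something smaller.
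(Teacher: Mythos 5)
Your Step 1 matches the paper's architecture (apply the packing theorem, then orient the rigid pieces and combine), but the load-bearing step, Step 2, is left as a wish rather than a lemma, and it is not the route the paper takes. You ask for an orientation of a $d$-rigid graph in which a chosen root reaches every vertex via $\Omega(\sqrt d)$ internally vertex-disjoint directed paths; no proof sketch is offered, the claim is not obviously true, and even if true it is not clear how to aggregate such ``rooted'' orientations over $t$ edge-disjoint pieces into a genuinely $k$-connected orientation (your Step 3 only says roots should be ``distributed'' appropriately). The actual mechanism the paper uses is quite different and much more elementary: it exploits the sparsity count $i_G(X)\le d|X|-\binom{d+1}{2}$ of minimally $d$-rigid graphs together with Hakimi's degree-prescribed orientation theorem to obtain what the paper calls a $(d,R)$-orientation, where $R$ is a fixed set of $\binom{d+1}{2}$ vertices, in-degree is $d$ off $R$ and $d-1$ on $R$; a short counting argument (\cref{l2}) then shows that in such an orientation every set $X$ disjoint from $R$ has at least $k$ in-neighbours once $d\ge 2k-2$.

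Concretely, the paper sets $d=2k$ and $t=O(k^3)$ (not $d=\Theta(k^2)$, $t=\Theta(k)$ as you propose, though both give $tk\cdot d(d+1)=O(k^5)$). Two of the rigid pieces are given a $(d,R)$-orientation and a reversed $(d,R)$-orientation, so that any small separator fails to cut off a set disjoint from $R$ on either side. The remaining $O(k^3)$ pieces are used as a ``hub'': one fixes an auxiliary $k$-connected graph $H$ on vertex set $R$ with $O(k^3)$ edges, and for each edge $ab$ of $H$ one uses one rigid piece (which is $2k$-connected, hence has $2k$ vertex-disjoint $a$--$b$ paths) to route $k$ directed paths $a\to b$ and $k$ directed paths $b\to a$, guaranteeing that $R$ stays strongly connected after deleting any $k-1$ vertices. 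The key idea you are missing is therefore the switch from rooted connectivity to degree-prescribed orientations controlled by the rigidity count (Hakimi plus the $(d,R)$-specification), together with the hub set $R$ that decouples ``local'' in/out-degree arguments from ``global'' strong connectivity. As written, your proposal does not close because the central lemma of Step 2 is unestablished and the combination step in Step 3 is not an argument.
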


The bound on $f(k)$ given by \cref{theorem:main2} is probably far from being tight. In particular, it is still open whether $f(k) = 2k$ holds.

The key new tool in our proof is a packing theorem for highly connected graphs (\cref{theorem:main1} below) that is interesting on its own right. Our original motivation for investigating such packing questions came from the following conjecture of Kriesell from 2003. 

\begin{conjecture}\label{conjecture:Kriesell} (See, e.g., \cite[Problem 444]{mohar.etal_2007})
    For every positive integer $k$ there exists a (smallest) integer $g(k)$ such that every $g(k)$-connected graph $G$ contains a spanning tree $T$ for which $G - E(T)$ is $k$-connected.
\end{conjecture}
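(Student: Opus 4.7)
The plan is to deduce Kriesell's conjecture directly from the packing theorem announced in the abstract (namely \cref{theorem:main1}), which guarantees that every $(t\cdot h(d))$-connected graph contains $t$ edge-disjoint $d$-rigid spanning subgraphs, with $h(d)=10d(d+1)$. Since a $d$-rigid spanning subgraph is in particular $d$-connected and hence connected, I would specialize this tool to $t=2$ and $d=k$, which reduces the problem to a routine observation.

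Concretely, let $G$ be a $(2h(k))$-connected graph, so that $G$ is $20k(k+1)$-connected. Apply \cref{theorem:main1} to obtain two edge-disjoint $k$-rigid spanning subgraphs $H_1$ and $H_2$ of $G$. Both are spanning and $k$-connected, and in particular $H_2$ is connected; let $T$ be any spanning tree of $H_2$. Then $T$ is a spanning tree of $G$, and since $E(T)\subseteq E(H_2)$ is disjoint from $E(H_1)$, we have $H_1\subseteq G-E(T)$. Because $H_1$ is a spanning $k$-connected subgraph, so is $G-E(T)$. This would yield $g(k)\le 2h(k) = 20k(k+1) = O(k^2)$, which is substantially stronger than the qualitative existence statement asked for.

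The entire difficulty of the argument is therefore absorbed into the packing theorem: once edge-disjoint highly connected spanning subgraphs are available in sufficient quantity, the desired spanning tree $T$ is obtained for free by discarding the redundant edges of one of them. I would expect the main obstacle in the paper to lie in the proof of the packing result itself, which presumably relies on the machinery of the generic $d$-dimensional rigidity matroid and on quantitative relationships between vertex-connectivity and generic rigidity, rather than on any further combinatorial trick specific to Kriesell's problem. It is also worth noting that this reduction does not exploit the full strength of \cref{theorem:main1}: one only uses that \emph{one} of the two pieces is $k$-connected while the other is merely connected, so any refinement of the packing theorem that allows the two subgraphs to have asymmetric connectivity requirements would immediately sharpen the resulting bound on $g(k)$.
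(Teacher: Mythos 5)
Your argument is correct and is exactly the quick deduction the paper records in the introduction: applying \cref{theorem:main1} with $t=2$ and $d=k$ yields two edge-disjoint $k$-rigid (hence $k$-connected) spanning subgraphs, and extracting a spanning tree from one of them gives $g(k)\le 20k(k+1)$. The paper does not stop there, however: the actual content of \cref{section:Kriesell} is \cref{theorem:Kriesellimproved}, which improves the bound to $g(d)\le d^2+3d+5$, and it does so by exploiting precisely the asymmetry you flag in your final remark. Rather than packing two full $d$-rigid spanning subgraphs (an $\rigmatroidt$-rigidity statement with $t=2$), the paper works in the matroid union $\graphicrigid=\mathcal{R}_d\vee\mathcal{R}_1$ of the $d$-dimensional rigidity matroid with the graphic matroid, adapting the random-ordering counting argument of \cite{soma} to this union (\cref{lemma:main,lemma:independent}) and replacing \cref{lemma:tdrigid} by its analogue \cref{lemma:graphicrigid}. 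Since the ``spare'' connected piece then only costs a spanning tree's worth of edges ($|V|-1$) rather than a full $d$-rigid subgraph's worth ($d|V|-\binom{d+1}{2}$), the required connectivity drops from quadratic-with-large-constant to $d^2+3d+5$. So your reduction is sound and in fact matches the paper's observation, but the paper's main contribution to Kriesell's problem is the sharper asymmetric packing, not the symmetric one you use.
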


As with \cref{conjecture:thomassen}, the edge-connected version of \cref{conjecture:Kriesell} is classical: it follows from a well-known theorem of Nash-Williams \cite{nash-williams_1961} and Tutte \cite{tutte_1961} that every $(2k+2)$-edge-connected graph $G$ contains a spanning tree $T$ such that $G - E(T)$ is $k$-edge-connected.
In particular, we have $g(1) = 4$.
The $k=2$ case of \cref{conjecture:Kriesell} was answered by the second author. In fact, we have the following ``packing theorem'' for $2$-rigid graphs. (Definitions are given in the next section.)

\begin{theorem}\label{theorem:jordan}\cite[Theorem 3.1]{jordan_2005}
    Every $6t$-connected graph contains $t$ edge-disjoint $2$-rigid (and hence $2$-connected) spanning subgraphs. In particular, $g(2) \leq 12$.
\end{theorem}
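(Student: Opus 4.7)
The plan is to convert the packing problem into a matroid-union condition. Let $G = (V, E)$ be $6t$-connected on $n$ vertices. A spanning $2$-rigid subgraph of $G$ contains a basis of the generic $2$-dimensional rigidity matroid $\mathcal{R}_2$ (which has rank $2n - 3$), so the desired packing is equivalent to packing $t$ edge-disjoint bases of $\mathcal{R}_2$ inside $E(G)$. By Edmonds' matroid union theorem, this is possible if and only if
\[
|E \setminus F| + t \cdot r_{\mathcal{R}_2}(F) \geq t(2n - 3) \quad \text{for every } F \subseteq E.
\]
Plugging in Lovász's rank formula $r_{\mathcal{R}_2}(F) = \min_{\mathcal{F}} \sum_{i=1}^{s}(2|V_i| - 3)$, where the minimum runs over all families $\mathcal{F} = \{V_1, \ldots, V_s\}$ of subsets of $V$, each of size at least $2$, whose induced subgraphs cover $F$, the packing condition rewrites as
\[
\bar{e}_G(\mathcal{F}) \geq t\Bigl(2n - 3 - \sum_{i=1}^{s}(2|V_i| - 3)\Bigr)
\]
for every such family $\mathcal{F}$, where $\bar{e}_G(\mathcal{F})$ denotes the number of edges of $G$ induced by no $V_i$.

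I would first settle the case when $\mathcal{F}$ is a partition of $V$ into $s$ parts. Here $\sum(2|V_i|-3) = 2n - 3s$, so the required bound simplifies to $\bar{e}_G(\mathcal{F}) \geq 3t(s-1)$. Since $G$ is $6t$-connected, Menger's theorem provides at least $6t$ edges leaving each nonempty proper part; summing and halving yields $\bar{e}_G(\mathcal{F}) \geq 3ts$, which exceeds what is needed. For the general case, my plan is an uncrossing argument: take a minimal counterexample family $\mathcal{F}$ and show that any overlap $V_i \cap V_j$ can be processed (by merging the pair into $V_i \cup V_j$, or splitting into $V_i \cup V_j$ and $V_i \cap V_j$) without improving the counterexample status; vertices left uncovered by $\mathcal{F}$ can be brought in by adjoining two-element sets through them at controlled cost. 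If this procedure terminates in a partition of $V$, the partition case above gives the contradiction.

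The main obstacle is the uncrossing analysis itself: an overlap simultaneously shrinks $\bar{e}_G(\mathcal{F})$ (more edges become covered) and inflates $\sum(2|V_i|-3)$ (the sets grow), and a careful submodular calculation is needed to show that the net defect never improves the counterexample. This is parallel to the uncrossing lying at the heart of the Lovász--Yemini theorem that $6$-connectivity implies $2$-rigidity, which is the $t=1$ case of the present result; I expect the $t$-fold version to follow the same structural outline with parameters scaled by $t$. Once the inequality is verified for every $\mathcal{F}$, the packing exists, and the bound $g(2) \leq 12$ is immediate: for $t = 2$, a $12$-connected graph $G$ contains two edge-disjoint $2$-connected spanning subgraphs $H_1, H_2$, and taking $T$ to be any spanning tree of $H_1$ gives $G - E(T) \supseteq H_2$, which is $2$-connected.
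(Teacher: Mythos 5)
The paper cites this result from Jordán's 2005 paper without reproducing the proof, so I can only compare your proposal with the standard Lovász--Yemini-plus-matroid-union strategy that Jordán's proof follows. Your framing is correct: the reduction to ``$\bar{e}_G(\mathcal{F}) + t\sum(2|V_i|-3) \geq t(2n-3)$ for every cover $\mathcal{F}$'' via the Nash-Williams/Edmonds union theorem and the Lovász--Yemini rank formula is the right reformulation, and your partition calculation (using that $6t$-vertex-connectivity gives $6t$-edge-connectivity, hence $\bar{e}_G(\mathcal{F}) \geq 3ts$) is sound. The final step deducing $g(2)\le 12$ from $t=2$ is also correct.

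However, the reduction to partitions that carries the argument does not work as sketched, and this is a genuine gap rather than a detail you have merely deferred. Two problems. First, merging $V_i$ and $V_j$ into $V_i\cup V_j$ strictly decreases $\sum(2|V_i|-3)$ (by $2|V_i\cap V_j|-3$) only when $|V_i\cap V_j|\ge 2$; applying this repeatedly terminates with a cover in which sets pairwise intersect in \emph{at most one} vertex, which is far from a partition, and there is no further uncrossing step that forces disjointness. Second, your proposed fix --- adjoining two-element sets through vertices not covered by $\mathcal{F}$ --- moves the quantity in the wrong direction: adding a set $\{u,v\}$ increases $\sum(2|V_i|-3)$ by $1$ and decreases $\bar{e}_G$ by at most $1$, so the left-hand side changes by at least $t-1>0$, meaning a counterexample cannot be transformed this way into one over a cover of all of $V$. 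The actual Lovász--Yemini argument (and its $t$-fold analogue) must handle covers with one-point overlaps directly, via a delicate double-counting over the rigid components and the vertices shared between them, and this counting is precisely where $6$-connectivity (resp.\ $6t$-connectivity) enters; the partition case you verified is the easy degenerate instance, not the heart of the proof.
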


This bound was subsequently improved to $g(2) \leq 8$ in \cite{szigeti}, where the authors proved an analogous packing result for the union of the $2$-dimensional generic rigidity matroid and the graphic matroid.

The $k=3$ case was settled in a similar fashion by the first three authors. In this case, the underlying matroid was the $\mathcal{C}^1_2$-cofactor matroid, which is conjectured to be the same as the $3$-dimensional generic rigidity matroid.

\begin{theorem}\label{theorem:cofactor}\cite[Theorem 5.11]{garamvolgyi.etal_2024}
    Every $12t$-connected graph contains $t$ edge-disjoint $\mathcal{C}^1_2$-rigid (and hence $3$-connected) spanning subgraphs. In particular, $g(3) \leq 24$.
\end{theorem}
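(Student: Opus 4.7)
The plan is to mimic the strategy used by the second author for the 2-dimensional case (Theorem~\ref{theorem:jordan}), adapted to the $\mathcal{C}^1_2$-cofactor matroid. The starting point is Edmonds' matroid union theorem applied to the $\mathcal{C}^1_2$-cofactor matroid on the ground set $\binom{V}{2}$. After translation to a partition form (using the rank formula $3n - 6$ for a $\mathcal{C}^1_2$-rigid graph on $n \geq 3$ vertices), this yields the following clean criterion: $G = (V,E)$ contains $t$ edge-disjoint $\mathcal{C}^1_2$-rigid spanning subgraphs if and only if $G$ itself is $\mathcal{C}^1_2$-rigid and for every partition $\mathcal{P} = \{V_1, \ldots, V_k\}$ of $V$ into classes of size at least $2$, one has
\[
e_G(\mathcal{P}) \;\geq\; 6t\,(k-1).
\]

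Now assume $G$ is $12t$-connected. First I would verify that $G$ is itself $\mathcal{C}^1_2$-rigid; this should follow from high connectivity via standard inductive reductions in the $\mathcal{C}^1_2$-cofactor matroid (extension operations and the fact that $\mathcal{C}^1_2$-rigid graphs are $3$-connected). Then, arguing by contradiction, suppose some partition $\mathcal{P}$ violates the above inequality and pick $\mathcal{P}$ to be ``minimal'' in a suitable sense (e.g.\ no two classes can be merged while keeping each $G[V_i]$ $\mathcal{C}^1_2$-rigid). By a uncrossing/merging argument, using that two $\mathcal{C}^1_2$-rigid subgraphs sharing at least~$3$ vertices glue into a single $\mathcal{C}^1_2$-rigid subgraph, one can refine $\mathcal{P}$ so that each $G[V_i]$ is $\mathcal{C}^1_2$-rigid and no larger $\mathcal{C}^1_2$-rigid union of classes exists.

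From this canonical partition, a double-counting of edge boundaries gives
\[
\sum_{i=1}^k |\delta_G(V_i)| \;=\; 2\,e_G(\mathcal{P}) \;<\; 12t\,(k-1),
\]
so averaging produces a class $V_i$ with $|\delta_G(V_i)| < 12t$. Since $|N_G(V_i) \setminus V_i| \leq |\delta_G(V_i)|$, this set would be a vertex separator of size less than $12t$, contradicting the assumed connectivity of $G$.

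The main obstacle, and the place where the rigidity-theoretic input really enters, is ensuring that the class $V_i$ singled out by this averaging actually gives a \emph{proper} vertex cut, i.e.\ that $V \setminus (V_i \cup N_G(V_i)) \neq \emptyset$. A naive averaging may select a ``dominating'' class whose neighbourhood covers everything outside it. Jordán's proof of Theorem~\ref{theorem:jordan} handles the analogous obstacle by a careful choice of minimal violating partition combined with structural properties of $2$-rigid graphs; the expectation is that the same blueprint works here, with the corresponding facts about $\mathcal{C}^1_2$ (namely $3$-connectivity of rigid graphs and the $3$-vertex gluing lemma) substituted in. Making this step precise, and in particular correctly treating degenerate small classes $|V_i| \leq 2$ that may appear in the matroid-packing formulation, is where the work lies.
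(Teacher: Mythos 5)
This statement is imported into the paper via a citation to \cite{garamvolgyi.etal_2024}; the present paper contains no proof of it, so there is nothing to compare your proposal against here. Evaluating your proposal on its own terms, it has a significant gap at its very foundation.

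The ``clean criterion'' you state as following from Edmonds' matroid union theorem --- that $G$ contains $t$ edge-disjoint $\mathcal{C}^1_2$-rigid spanning subgraphs if and only if $G$ is $\mathcal{C}^1_2$-rigid and $e_G(\mathcal{P}) \geq 6t(k-1)$ for every partition $\mathcal{P}$ of $V$ into $k$ classes of size at least $2$ --- is not a correct consequence of matroid union for cofactor or rigidity matroids. The Nash-Williams--Tutte partition criterion is special to the graphic matroid, whose rank function $r(F) = n - c(F)$ decomposes over vertex partitions. Matroid union for $\mathcal{C}^1_2$ gives $r^t(E) = \min_{F \subseteq E}\bigl(|E \setminus F| + t\, r(F)\bigr)$, and here $r$ is governed by the Whiteley cover-type formula (as in Lovász--Yemini for $\mathcal{R}_2$): the minimum is achieved on \emph{covers} of the edge set by vertex subsets, with intersection conditions, not on vertex \emph{partitions} into large parts. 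Optimizing over all subsets $F$ of $E$ does not collapse to a partition inequality, and there is no reason the minimizing flat separates $V$. This is not a technical wrinkle; it invalidates the starting point of the argument.

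Even granting some correct partition-free reformulation, the final step also does not close. You observe that averaging $\sum_i |\delta_G(V_i)| < 12t(k-1)$ produces some class with small boundary, and you acknowledge that ensuring $V \setminus (V_i \cup N_G(V_i)) \neq \emptyset$ is ``where the work lies.'' This is indeed the crux, and it is left open: a priori the low-boundary class you select could dominate the rest of the graph, in which case no cut is produced. Jordán's proof of \cref{theorem:jordan} does not select a class by naive averaging; it proceeds through a careful analysis of the structure of tight sets in the rank formula (atoms, or ``bricks''), and a direct adaptation of that analysis to $\mathcal{C}^1_2$ is precisely the content that is missing from your sketch. As written, the proposal is a plausible-sounding blueprint, but neither of its two load-bearing steps --- the reduction to a partition criterion, and the extraction of a genuine small separator --- is established.
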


Our second main result is a similar packing result for the $d$-dimensional generic rigidity matroid.

\begin{theorem}\label{theorem:main1}
    Every $\left(t \cdot 10d(d+1)\right)$-connected graph contains $t$ edge-disjoint $d$-rigid (and hence $d$-connected) spanning subgraphs.
\end{theorem}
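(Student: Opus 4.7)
My plan is to apply the matroid union theorem to the $d$-dimensional generic rigidity matroid $\mathcal{R}_d$, which has rank $dn-\binom{d+1}{2}$ on $E(K_n)$ for $n\ge d+1$. A spanning subgraph $H$ of $G$ is $d$-rigid iff $E(H)$ spans $\mathcal{R}_d$, so packing $t$ edge-disjoint $d$-rigid spanning subgraphs is equivalent, by Edmonds' theorem, to the rank condition
$$
|E(G)\setminus F| + t\cdot r_{\mathcal{R}_d}(F)\ \ge\ t\Bigl(dn-\tbinom{d+1}{2}\Bigr) \qquad\text{for every } F\subseteq E(G). \qquad(*)
$$

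To verify $(*)$ for a given $F\subseteq E(G)$, I would reformulate it as a partition inequality. Taking a partition $\mathcal{P}=\{V_1,\dots,V_p\}$ of $V$ (chosen to align with the rigid components of $(V,F)$, after passing to the $\mathcal{R}_d$-closure of $F$), and letting $F_i=F\cap E(V_i)$ and $F_c$ the crossing edges of $F$, submodularity of $r_{\mathcal{R}_d}$ combined with Maxwell's count bound yields
$$
r_{\mathcal{R}_d}(F)\ \le\ \sum_{i}r_{\mathcal{R}_d}(F_i)+|F_c|\ \le\ \sum_i\Bigl(d|V_i|-\tbinom{d+1}{2}\Bigr)+|F_c|,
$$
provided each $|V_i|\ge d$. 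Plugging this into $(*)$ and using $|E(G)\setminus F|\ge e_G(\mathcal{P})-|F_c|$, the required inequality reduces to
$$
e_G(\mathcal{P})\ \ge\ t(p-1)\tbinom{d+1}{2}.
$$
Since $G$ is $t\cdot 10d(d+1)$-connected (hence $t\cdot 10d(d+1)$-edge-connected), we have $e_G(\mathcal{P})\ge \tfrac12\cdot t\cdot 10d(d+1)\cdot p = 5td(d+1)p$, which comfortably dominates the required $\tfrac12\,td(d+1)(p-1)$, with plenty of slack to spare.

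The main obstacle is that this sketch is oversimplified in two crucial ways, and overcoming them is where the generous constant $10d(d+1)$ is actually needed. First, for $d\ge 3$ the rigidity matroid $\mathcal{R}_d$ has no known combinatorial description, and rigid components may overlap on up to $d$ common vertices, so they do not genuinely form a partition; one must either track the overlap corrections explicitly via a cover-type inequality or reduce to the disjoint case by uncrossing arguments exploiting the submodularity of $r_{\mathcal{R}_d}$. Second, the Maxwell bound $d|V_i|-\binom{d+1}{2}$ is wasteful (indeed negative) for parts with $|V_i|<d$, and such small parts can in principle spoil the partition inequality. To rule them out, the natural tool is a Lov\'{a}sz--Yemini-type theorem asserting that sufficiently connected graphs are $d$-rigid (with an appropriate dependence on $d$): the high vertex-connectivity of $G$ would then force the local structure around any small part to already carry a rigid subgraph, so the optimum partition has all parts of size at least $d$. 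Proving such a Lov\'{a}sz--Yemini theorem for general $d$, with the right connectivity threshold to match the $10d(d+1)$ bound, is the chief technical hurdle of the argument.
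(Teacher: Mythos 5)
Your approach — invoking Edmonds' matroid union rank formula and then trying to verify the rank condition $(*)$ via a Maxwell-type partition inequality — is genuinely different from the paper's proof, and it is essentially the route taken in the known $d=2$ case (Jordán's Theorem~\ref{theorem:jordan}) and the cofactor case (Theorem~\ref{theorem:cofactor}). However, it contains a gap that you gesture at but do not close, and which is in fact the reason the paper abandons this route for $d\geq 3$.

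The core problem is a direction-of-inequality issue. To verify $(*)$ you must \emph{lower}-bound $r_{\mathcal{R}_d}(F)$; the chain
\[
r_{\mathcal{R}_d}(F)\ \le\ \sum_i r_{\mathcal{R}_d}(F_i)+|F_c|\ \le\ \sum_i\Bigl(d|V_i|-\tbinom{d+1}{2}\Bigr)+|F_c|
\]
is an \emph{upper} bound, so plugging it into $(*)$ proves nothing. In the $d=2$ argument this step works because the Lov\'asz--Yemini rank formula makes the analogous expression \emph{exact} for a flat $F$ (the rigid components of a flat pairwise share at most one vertex and the rank is literally the sum of their counts). For $d\ge 3$ no rank formula for $\mathcal{R}_d$ is known: rigid components of a flat can overlap on up to $d-1$ vertices, the Maxwell count is not tight (double-banana-type examples), and consequently the equality you would need simply is not available. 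Your proposed fix --- appealing to a Lov\'asz--Yemini-type theorem that highly connected graphs are $d$-rigid (this is exactly Theorem~\ref{theorem:soma} from \cite{soma}, the $t=1$ case) --- rules out small parts but does nothing to recover an exact rank formula or to control the overlap corrections, which is where the argument actually breaks.

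The paper's proof sidesteps the rank formula entirely. It first reduces (via the standard ``add edges to make $G$ $\mathcal{R}_d^t$-rigid and derive a contradiction'' trick) to showing that every sufficiently connected $\mathcal{R}_d^t$-rigid graph is \emph{redundantly} $\mathcal{R}_d^t$-rigid (Lemma~\ref{lemmatkconnrtrig}). The heart is Lemma~\ref{redundvertex}: using a balanced orientation, a random vertex subset $U$, random orderings of $U$, a Chernoff bound, and the expectation computation of Lemma~\ref{lemma:ordering}, one constructs an $\mathcal{R}_d^t$-independent spanning subgraph $G_0$ with so many edges that any completion to a base of $\mathcal{R}_d^t(G)$ must leave some vertex $v_0\in V\setminus U$ of degree exactly $td$; deleting $v_0$ preserves $\mathcal{R}_d^t$-rigidity (Lemma~\ref{lemma:edgesplit}). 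This is then bootstrapped by induction using Lemmas~\ref{lemma:GplusKNv} and~\ref{Tanilemma}. In short, the paper trades the partition inequality, which for $d\geq 3$ has no rigorous substitute, for a constructive probabilistic argument that only needs the sparsity count as a one-sided bound.
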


The existence of a constant $h(d)$ such that every $(t \cdot h(d))$-connected graph contains $t$ edge-disjoint $d$-connected spanning subgraphs was conjectured by the first three authors \cite[Conjecture 5.10]{garamvolgyi.etal_2024}.
The bound in \cref{theorem:main1} is almost certainly not tight. In fact, we believe that every $t\cdot d(d+1)$-connected graph contains $t$ edge disjoint $d$-rigid spanning subgraphs, and hence that $h(d) \leq d(d+1)$. For $t = 1$, this was recently proved by the fourth author.

\begin{theorem}\cite[Theorem 1.1]{soma}\label{theorem:soma}
    Every $d(d+1)$-connected graph is $d$-rigid.
\end{theorem}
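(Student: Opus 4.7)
My plan is to argue by contradiction using the structure of the $d$-rigid components of $G$. Suppose $G$ is $d(d+1)$-connected but not $d$-rigid, and consider the family of maximal $d$-rigid induced subgraphs of $G$ (the \emph{$d$-rigid components}), with vertex sets $V_1, V_2, \dots, V_k$. Two standard facts from rigidity theory will drive the argument: the rigid components cover every vertex of $G$ (since $\delta(G) \geq d(d+1) \geq 1$), and any two distinct rigid components share at most $d$ vertices (else their union would again be $d$-rigid by generic rigidity of overlapping frameworks in dimension $d$, contradicting maximality).

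Since $G$ is not $d$-rigid, some $V_1$ is a proper subset of $V(G)$. I would pick such a $V_1$ to be extremal --- for instance, of minimum size. Set $N := V_1 \cap \bigcup_{j \neq 1} V_j$. Every edge of $G$ incident to a vertex $u \in V_1 \setminus N$ lies in the rigid component with vertex set $V_1$ (since $u$ belongs to no other component), so the other endpoint is also in $V_1$; consequently $N$ separates $V_1 \setminus N$ from $V \setminus V_1$. If $V_1 \setminus N \neq \emptyset$, then $N$ is a vertex cut of $G$, forcing $|N| \geq d(d+1)$. Combined with $|V_1 \cap V_j| \leq d$, this means $V_1$ meets at least $d+1$ other rigid components, each in exactly $d$ vertices. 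The complementary case $V_1 = N$ --- where every vertex of $V_1$ is shared with another rigid component --- should be reducible to the first by a careful choice of extremal $V_1$.

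From this detailed structural information I would then attempt either (a) a direct gluing argument, showing that the union of $V_1$ with one (or several) neighbouring rigid components is itself $d$-rigid, thereby violating the maximality of $V_1$; or (b) an explicit reduction, replacing $V_1$ by a smaller gadget supported on $N$ to form a strictly smaller graph $G'$, verifying that $G'$ inherits $d(d+1)$-connectivity and remains non-$d$-rigid, and concluding by induction on $|V(G)|$. The base case of such an induction is the complete graph $K_{d(d+1)+1}$, which is clearly $d$-rigid.

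\textbf{Main obstacle.} The sharp value $d(d+1) = 2\binom{d+1}{2}$ must emerge naturally from the combinatorics, so the crux is to extract a contradiction precisely in the tight configuration where $V_1$ shares exactly $d$ vertices with exactly $d+1$ other rigid components. For $d \geq 3$ we lack a combinatorial rank formula for the $d$-dimensional generic rigidity matroid (in contrast to the Lovász--Yemini theory for $d=2$), so the argument must rely on structural manipulations of rigid components rather than a direct rank computation. Designing the correct gluing or reduction at this juncture is, I expect, the heart of the proof.
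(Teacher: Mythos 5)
Your proposal sets up a decomposition into maximal $d$-rigid induced subgraphs and then, as you yourself flag under ``Main obstacle,'' leaves the decisive gluing/reduction step completely open. That missing step is not a detail one can expect to fill in: for $d \geq 3$ there is no known criterion for when a union of rigid subgraphs overlapping in fewer than $d$ vertices is again rigid (property \textit{(e)} of $\mathcal{R}_d$ requires $\geq d$ common vertices, and maximality of components forces $|V_1 \cap V_j| \leq d-1$, not $\leq d$ as you write); and since the sets $V_1 \cap V_j$ can overlap among themselves, even the claim that $V_1$ meets at least $d+1$ other components in $d$ vertices each does not follow from $|N| \geq d(d+1)$. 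The route via a Lov\'asz--Yemini-style component analysis stalls exactly where you worry it might, and no repair is known.

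The actual proof in \cite{soma} (visible in adapted form in Section~5 of this paper, where the same method is applied to $\mathcal{M}_d$) goes a genuinely different way: it is probabilistic and never examines rigid components. One takes a counterexample minimizing $|V|$ and, subject to that, maximizing $|E|$. Minimality forces no $N_G(v)$ to be a clique (otherwise delete $v$), maximality forces every $\mathcal{R}_d$-linked pair to be adjacent (otherwise add the edge), and these together with the gluing property bound the pairwise intersections of maximal cliques inside each $G[N_G(v)]$ by $d-1$. A uniformly random vertex ordering $\pi$ then produces a subgraph $G^d_\pi$, built by connecting each vertex to at most $d$ earlier neighbours, with one extra edge performing a $d$-dimensional edge split whenever the earlier neighbourhood is non-clique; this $G^d_\pi$ is always $\mathcal{R}_d$-independent, while an expectation computation (via the combinatorial lemma on clique intersections and the ordering lemma) gives $\mathbb{E}\,|E(G^d_\pi)| \geq d|V|$, exceeding the sparsity bound $d|V| - \binom{d+1}{2}$. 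The constant $d(d+1)$ thus emerges from this counting argument, not from a structural analysis of rigid components, so your closing remark that the argument ``must rely on structural manipulations of rigid components rather than a direct rank computation'' points in precisely the wrong direction.
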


We show that for packing $d$-rigid spanning subgraphs, the bound $t\cdot d(d+1)$ would be optimal (\cref{lemma:tightexample}).

We prove \cref{theorem:main1} in \cref{section:packing}, and in \cref{section:Thomassen} we use it to derive \cref{theorem:main2}. In \cref{section:Kriesell} we further investigate the conjecture of Kriesell.
\cref{theorem:main1} implies that Conjecture \ref{conjecture:Kriesell} is true with $g(d) \leq 20d(d+1)$. To improve upon this result, we adapt the proof technique of \cref{theorem:soma} in \cite{soma} to the union of the $d$-dimensional rigidity matroid and the graphic matroid. This leads to the following bound.

\begin{theorem}\label{theorem:Kriesellimproved}
    Every $(d^2+3d+5)$-connected graph $G$ contains edge-disjoint spanning subgraphs $G_0$ and $T$ such that $G_0$ is $d$-rigid and $T$ is a tree. In particular, $g(d) \leq d^2 + 3d + 5$. 
\end{theorem}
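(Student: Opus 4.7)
The plan is to adapt Villányi's proof technique for Theorem~\ref{theorem:soma} from the $d$-dimensional rigidity matroid $\mathcal{R}_d$ to the matroid union $\mathcal{N} := \mathcal{R}_d \vee \mathcal{M}_g$, where $\mathcal{M}_g$ denotes the graphic (cycle) matroid on $E(K_n)$. By Edmonds' matroid union theorem, the existence of edge-disjoint spanning subgraphs $G_0$ (which is $d$-rigid) and $T$ (a tree) in $G$ is equivalent to $E(G)$ being spanning in $\mathcal{N}$ on $V(G)$, i.e., to the rank equality
\[
r_{\mathcal{N}}(E(G)) = (d+1)|V(G)| - \binom{d+1}{2} - 1.
\]
An independent set in $\mathcal{N}$ is the disjoint union of a $d$-sparse set (one satisfying $|F'| \le d|V(F')| - \binom{d+1}{2}$ on every vertex-induced subset with at least $d+1$ vertices) and a forest; the spanning sets satisfy an analogous count with $d$ replaced by $d+1$.

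I would then run the same inductive scheme used for Theorem~\ref{theorem:soma}. Take a minimum counterexample $G$ to the theorem. For the base case, when $|V(G)| \le d+2$, the connectivity hypothesis forces $G$ to contain a complete graph on $|V(G)|$ vertices, and $K_{d+2}$ already has a $d$-rigid subgraph with an edge-disjoint spanning tree. For the inductive step, the main task is to identify an operation analogous to the rigidity $1$-extension that is compatible with $\mathcal{N}$. Specifically, I would locate a vertex $v$ of suitably bounded degree such that some inverse operation (removing $v$ and possibly adding back a few edges among its neighbours) yields a smaller graph $G'$ that is still $(d^2+3d+5)$-connected. Induction then gives that $E(G')$ spans $\mathcal{N}$, and one lifts this property to $E(G)$ via a matroidal swap argument at $v$. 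The extra edge budget provided by the graphic summand of $\mathcal{N}$ (the $-1$ above) enables the lift once $v$ has a suitable minimum number of neighbours; this is where the shift from the $d(d+1)$ bound of Theorem~\ref{theorem:soma} to approximately $(d+1)(d+2) = d^2+3d+2$ naturally arises, with the additional additive $3$ absorbing the connectivity loss incurred during the reduction.

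The main obstacle I anticipate is in showing that the reduction can always be performed without destroying connectivity. In the proof of Theorem~\ref{theorem:soma}, if no suitable reducible vertex exists, a small vertex cut can be located among the rigid components of a carefully chosen subgraph, and a Menger-type argument produces the required bound. Here the analogous step must decompose $E(G)$ into components defined with respect to $\mathcal{N}$, i.e., into tight sets for the $(d+1)$-count, which are subtler than rigid components because of the coupling between the rigidity and graphic parts. Verifying that the threshold $d^2+3d+5$ is indeed sufficient to exclude every bad configuration will require a careful case analysis of the structure of these tight sets; this is the step I expect to require the most work.
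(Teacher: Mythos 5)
Your setup is right: the theorem is equivalent to showing that a suitably connected graph $G$ is $\mathcal{M}_d$-rigid, where $\mathcal{M}_d = \mathcal{R}_d \vee \mathcal{R}_1$ is the union of the rigidity matroid and the graphic matroid, and the rank of a spanning set is $(d+1)n-\binom{d+1}{2}-1$. You are also right that the paper proves this by adapting the proof of Theorem~\ref{theorem:soma}. But your description of that proof mechanism is not what that proof does, and the gap is serious.

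The argument is not an inductive reduction-and-lift scheme. There is no search for a ``reducible'' vertex of bounded degree, no deletion of a vertex followed by a matroidal swap to lift a spanning set from $G'$ to $G$, and no Menger-type argument locating a small vertex cut among ``rigid components.'' Instead, the proof takes a counterexample $G$ that is (i) vertex-minimal and then (ii) edge-maximal, and uses this minimality only to establish three \emph{structural} facts: every $\mathcal{M}_d$-linked pair of vertices is adjacent; no neighbourhood $N_G(v)$ induces a clique (else delete $v$, which preserves $c$-connectivity since $N_G(v)$ is a clique); and, crucially, any two maximal cliques $H_1,H_2$ of $G[N_G(v)]$ satisfy $|H_1\cap H_2|\le \ell-2$ for $\ell = d+\D+2$, because otherwise $G[H_1\cup H_2\cup\{v\}]$ would be $\mathcal{M}_d$-rigid (via the gluing lemma \cref{lemma:graphicrigidgluing}) and hence complete, contradicting maximality of $H_1,H_2$. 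With these three facts in hand, the contradiction comes from a \emph{probabilistic counting argument}: take a uniformly random ordering $\pi$ of $V$ and build the subgraph $G_\pi^{D}$ (with $D=d+1$) by joining each vertex to $D$, or to $D+1$, of its earlier neighbours depending on whether those earlier neighbours form a clique. \cref{lemma:independent} shows $G_\pi^{D}$ is always $\mathcal{M}_d$-independent, hence has at most $(d+1)n - \binom{d+1}{2}-1$ edges; but \cref{lemma:main}, using the overlap bound $\ell-2$ via the combinatorial covering \cref{lemma:combinatorial}, shows $\EE(|E_\pi^D|)\ge (d+1)n$ once the minimum degree $k_0$ satisfies $k_0^2+k_0(1-D(D+1))-\ell(\ell+1)\ge 0$, which is where $d^2+3d+5$ comes from. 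Your proposal is missing exactly the load-bearing pieces --- the random-ordering construction $G_\pi^D$, the covering \cref{lemma:combinatorial}, and the role of the overlap parameter $\ell$ --- and replaces them with a reduction step (find $v$, recurse, swap) that does not appear in either proof and for which you give no mechanism. As written, the proposal would not produce the theorem.
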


The bound given by \cref{theorem:Kriesellimproved} is still not optimal. We believe that every $(d(d+1)+2)$-connected graph contains edge-disjoint copies of a $d$-rigid spanning subgraph and a spanning tree. Again, this bound would be tight (\cref{lemma:tightexample}).

An immediate corollary of Theorem \ref{theorem:Kriesellimproved} is that if $G$ is sufficiently highly connected, then
for each pair $s,t\in V(G)$ there exists a path $P$ from $s$ to $t$ in $G$ such that $G-E(P)$ is $k$-connected.
The existence of such paths was verified earlier in \cite{kawarabayashi.etal_2008}, assuming that $G$ is $(1600k^4+k+2)$-connected. 
With Theorem \ref{theorem:Kriesellimproved} the connectivity requirement can be substantially weakened.

\section{Preliminaries}

We start by setting some notation. Throughout the paper we only consider simple graphs, that is, graphs without loops and parallel edges.
For a graph $G$, we let $V(G)$ and $E(G)$ denote the vertex and edge sets of $G$, respectively. For a subset $X \subseteq V(G)$ we let $G[X]$ denote the subgraph of $G$ induced by $X$, and we let $i_G(X) = |E(G[X])|$ be the number of edges induced by $X$ in $G$. We use $K(X)$ to denote the complete graph on vertex set $X$, and similarly, $K_n$ denotes the complete graph on $n$ vertices.  Given a vertex $v \in V(G)$, $N_G(v)$ is the set of neighbors of $v$ and $\deg_G(v) = |N_G(v)|$ is the degree of $v$ in $G$. Given a positive integer $k$, we say that a connected graph is \emph{$k$-connected} if it has at least $k+1$ vertices and it remains connected after the removal of any set of fewer than $k$ vertices.

For a directed graph $D$ and a vertex $v \in V(D)$, we use $N_D^-(v)$ and $N_D^+(v)$ to denote the set of in-neighbors and out-neighbors of $v$, respectively. (A vertex $v\in V-X$ is an \emph{in-neighbor} of the vertex set $X$ in the directed graph $D$ if there is an edge $vx$ of $D$ with $x\in X$. Out-neighbors are defined analogously.) We let $\rho_D(v) = |N_D^-(v)|$ and $\delta_D(v) = |N_D^+(v)|$ denote the in-degree and out-degree, respectively, of $v$ in $D$.
A directed graph is \emph{strongly connected} if it contains a directed $u$-$v$ path for every pair of vertices $u$ and $v$, and it is \emph{$k$-connected}, for a positive integer $k$, if it has at least $k+1$ vertices and it remains strongly connected after the removal of any set of fewer than $k$ vertices.

\subsection{Rigidity matroids}

Next, we recall the relevant definitions and facts from rigidity theory. 
For completeness, we start by giving the geometric definition of the generic $d$-dimensional rigidity matroid. In fact, we will only use some combinatorial properties of this matroid, which we collect below.
We assume familiarity with the basic notions of matroid theory; the standard reference is \cite{oxley}. For a more thorough introduction to rigidity theory, see, e.g., \cite{handbook,whiteley_1996}.

Let $G = (V,E)$ be a graph and let $d$ be a positive integer. A \emph{($d$-dimensional) realization} of $G$ is a pair $(G,p)$, where $p : V(G) \rightarrow \RR^d$ maps the vertices of $G$ to $d$-dimensional Euclidean space. A realization is \emph{generic} if its coordinates do not satisfy any nonzero polynomial with rational coefficients.  We identify the space of all $d$-dimensional realizations with $(\RR^d)^V$, and we define the \emph{measurement map} $m_{d,G}: (\RR^d)^V \rightarrow \RR^E$ by \[m_{d,G}(p) = \left(||p(u) - p(v)||^2\right)_{uv \in E}\]
The \emph{rigidity matrix} $R(G,p)$ of a realization $(G,p)$ is the Jacobian of $m_{d,G}$ evaluated at the point $p$. This is a matrix whose rows are indexed by the edges of $G$, and hence the row matroid of $R(G,p)$ can be viewed as a matroid on ground set $E$. It is known that this row matroid is the same for every generic $d$-dimensional realization of $G$. Thus we define the \emph{$d$-dimensional rigidity matroid} of $G$, denoted by $\mathcal{R}_d(G)$, to be the row matroid of $R(G,p)$ for some generic $d$-dimensional realization $(G,p)$. We let $r_d$ denote the rank function of $\mathcal{R}_d$. Using a slight abuse of terminology, we also use $r_d(G)$ to denote the rank of $\mathcal{R}_d(G)$.

We say that a graph $G = (V,E)$ is \emph{$d$-rigid} if $r_d(G) = r_d(K(V))$, and \emph{minimally $d$-rigid} if it is $d$-rigid but $G-e$ is not, for every $e \in E$. Finally, $G$ is \emph{$\mathcal{R}_d$-independent} if $r_d(G) = |E|$. In other words, $G$ is $d$-rigid (resp.\ minimally $d$-rigid, $\mathcal{R}_d$-independent) if and only if $E$ is a spanning set (resp.\ base, independent set) in $\mathcal{R}_d(K(V))$. It is folklore that a graph is $1$-rigid if and only if it is connected. A combinatorial characterization (and an efficient deterministic recognition algorithm) is also available for $2$-rigid graphs, but finding such a characterization for $d$-rigid graphs is a major open question for $d \geq 3$.

As we noted above, we shall only use some well-known combinatorial properties of $\mathcal{R}_d(G)$. These are as follows.

\begin{enumerate}
    \item For $n \geq d$, $r_d(K_n) = dn - \binom{d+1}{2}.$ 
    \item Hence if $G = (V,E)$ is a minimally $d$-rigid graph on at least $d$ vertices, then $|E| = d|V| - \binom{d+1}{2}$, and 
    \begin{equation}\label{eq:sparse}
    i_G(X) \leq d|X| - \binom{d+1}{2}
    \end{equation}
    holds for every subset $X \subseteq V$ of vertices with $|X| \geq d$.
    \item The addition of a vertex of degree $d$ to a graph preserves $\mathcal{R}_d$-independence as well as $d$-rigidity.
    \item If a graph on at least $d+1$ vertices is $d$-rigid, then it is $d$-connected.
    \item If $G_1$ and $G_2$ are $d$-rigid graphs with at least $d$ vertices in common, then $G_1 \cup G_2$ is also $d$-rigid.
\end{enumerate}

The \emph{($d$-dimensional) edge split operation} replaces an edge $uv$ of graph $G$ with
a new vertex joined to $u$ and $v$, as well as to $d-1$ other vertices of $G$. 

\begin{enumerate}[resume]
  \item The $d$-dimensional edge split operation preserves $\mathcal{R}_d$-independence as well as $d$-rigidity.
\end{enumerate}

We note that properties \textit{(a)-(f)} are shared by all \emph{$1$-extendable abstract rigidity matroids}, see \cite{graver,nguyen_2010}. Since our proofs will only use these properties, our results involving the rigidity matroid remain true for any $1$-extendable abstract rigidity matroid. For simplicity, we only give the statements for the generic rigidity matroid.

\subsection{Unions of rigidity matroids}

We shall also consider unions of rigidity matroids.
Let $\mathcal{M}_i = (E,\mathcal{I}_i), i \in \{1,\ldots,t\}$ be a collection of matroids on a common ground set $E$. The \emph{union} of $\mathcal{M}_1,\ldots,\mathcal{M}_t$ is the matroid $\mathcal{M} = (E,\mathcal{I})$ whose independent sets are defined by
\[\mathcal{I} = \{I_1 \cup \ldots \cup I_t : I_1 \in \mathcal{I}_1, \ldots, I_t \in \mathcal{I}_t\}.\]
Let $r_\mathcal{M}$ and $r_{\mathcal{M}_i}, i \in \{1,\ldots,t\}$ denote the rank functions of the respective matroids.  It is immediate from the definition of $\mathcal{M}$ that $r_\mathcal{M}(E) \leq \sum_{i = 1}^t r_{\mathcal{M}_i}(E)$, with equality if and only if $E$ contains disjoint subsets $E_1,\ldots,E_t$ such that $E_i$ is a base of $\mathcal{M}_i$ for each $i \in \{1,\ldots,t\}$. 

Let $G = (V,E)$ be a graph and let $t$ be a positive integer. We shall denote the $t$-fold union of the $d$-dimensional rigidity matroid of $G$ by $\rigmatroidt(G) = (E,r_d^t)$. Analogously to the $t=1$ case, we define $G$ to be \emph{$\rigmatroidt$-rigid} if $r_d^t(G) = r_d^t(K(V))$, and to be \emph{$\rigmatroidt$-independent} if $r_d^t(G) = |E|$. 
Let us define a pair of vertices $\{u,v\}$ to be \emph{$\rigmatroidt$-linked in $G$} if $r_d^t(G+uv) = r_d^t(G)$. Thus $G$ is $\rigmatroidt$-rigid if and only if every pair of vertices of $G$ is $\rigmatroidt$-linked in $G$.

It follows from property \textit{(c)} above that the addition of a vertex of degree $td$ preserves $\rigmatroidt$-independence. Similarly, it follows from property \textit{(f)} (together with property \textit{(c)}) that the $td$-dimensional edge split operation preserves $\rigmatroidt$-independence.

\begin{lemma}\label{lemma:tdrigid}
    If $n \geq 2td$, then  $r_d^t(K_n) = tdn - t\binom{d+1}{2}$. Hence an $\rigmatroidt$-rigid graph on at least $2td$ vertices contains $t$ edge-disjoint $d$-rigid spanning subgraphs. 
\end{lemma}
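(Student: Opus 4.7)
The plan is to reduce the first equality to a constructive packing statement. Since $r_d^t \leq t \cdot r_d$ in general, the matroid union theorem implies that the claimed equality $r_d^t(K_n) = tdn - t\binom{d+1}{2} = t \cdot r_d(K_n)$ is equivalent to $K_n$ containing $t$ edge-disjoint bases of $\mathcal{R}_d(K_n)$, i.e., $t$ edge-disjoint minimally $d$-rigid spanning subgraphs; and such bases can always be extracted from any $t$ edge-disjoint $d$-rigid spanning subgraphs. The second sentence of the lemma then follows immediately: for an $\rigmatroidt$-rigid graph $G$ on $n \geq 2td$ vertices, the chain
\[t \cdot r_d(K(V(G))) = r_d^t(K(V(G))) = r_d^t(G) \leq t \cdot r_d(G) \leq t \cdot r_d(K(V(G)))\]
forces $r_d^t(G) = t \cdot r_d(G)$, yielding a packing inside $G$.

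I would construct the $t$ edge-disjoint $d$-rigid spanning subgraphs of $K_n$ by induction on $n$, starting from $n = 2td$. For the inductive step from $n$ to $n+1$, I take the given packing of $K_n$ and add a new vertex $v$ together with $td$ of its incident edges, distributed so that each subgraph receives a different set of $d$ neighbours of $v$ (this only requires $n \geq td$). Each extended subgraph remains $d$-rigid by property \emph{(c)}.

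The main step is the base case $n = 2td$. Partition $V(K_{2td})$ into $t$ sets $A_1, \ldots, A_t$ of size $2d$ each. For each pair $i \neq j$, the edges between $A_i$ and $A_j$ form a copy of $K_{2d, 2d}$, which is $2d$-regular bipartite; by König's edge-colouring theorem it decomposes into $2d$ perfect matchings, which I group into two $d$-regular bipartite subgraphs $H^{(i)}_{ij}$ and $H^{(j)}_{ij}$, assigning them to $G_i$ and $G_j$ respectively. Define $G_i = K(A_i) \cup \bigcup_{j \neq i} H^{(i)}_{ij}$. Then the $G_i$ partition $E(K_{2td})$, and each $G_i$ is $d$-rigid: $K(A_i)$ is $d$-rigid since $|A_i| = 2d \geq d+1$, and the vertices of $V \setminus A_i$ can then be attached one at a time, each having exactly $d$ neighbours in $G_i$ (all lying inside $A_i$), so property \emph{(c)} applies repeatedly.

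For the redundancy claim, I use the edge-transitivity of $K_n$: it suffices to exhibit one packing of $t$ edge-disjoint minimally $d$-rigid spanning subgraphs in which some edge of $K_n$ is unused, because an automorphism of $K_n$ sending this edge to any prescribed $e$ then produces a packing of $K_n - e$. The number of unused edges in any such packing equals $\binom{n}{2} - t(dn - \binom{d+1}{2})$, which a direct calculation shows is $td(d-1)/2$ at $n = 2td$ and strictly larger for $n > 2td$; this is positive exactly when $d \geq 2$ or $n \geq 2td + 1$, matching the hypothesis of the moreover clause. The main obstacle in the plan is the base case construction; once the König decomposition is set up and property \emph{(c)} is invoked repeatedly, the rest amounts to matroid union bookkeeping and an edge count.
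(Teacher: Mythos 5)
Your proof is correct, and it takes a genuinely different route from the paper's. The paper constructs the packing of $K_n$ for general $n\geq 2td$ all at once: it fixes $2t$ disjoint $d$-sets $V_i^1,V_i^2$, builds each $G_i$ as the clique on $V_i^1\cup V_i^2$ together with certain complete bipartite graphs to the other $V_\ell^j$'s and to the leftover set $X$, and verifies directly that every vertex outside $V_i^1\cup V_i^2$ has exactly $d$ neighbours in $G_i$, so property \textit{(c)} applies. You instead do the base case $n=2td$ cleanly (a partition into $t$ cliques $K_{2d}$ plus a K\H{o}nig decomposition of the bipartite parts into $d$-regular halves) and then induct, adding each new vertex with $td$ edges split $d$-per-subgraph; structurally these amount to the same ``$K_{2d}$ plus degree-$d$ vertex additions'' pattern, just packaged differently, and your packing is an exact partition of $E(K_{2td})$ whereas the paper's leaves slack. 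Where you genuinely diverge is the redundancy clause: the paper handles it by modifying the construction case-by-case (using that $K_{2d}$ is redundantly $d$-rigid when $d\geq 2$, and placing $u,v$ in $V_1^2$ and $X$ when $n\geq 2td+1$), while you observe that any packing of minimally $d$-rigid bases leaves exactly $\binom{n}{2}-t\bigl(dn-\binom{d+1}{2}\bigr)$ edges unused, that this count is positive precisely under the stated hypotheses, and that edge-transitivity of $K_n$ then lets you steer an unused edge onto any prescribed $e$. This is arguably cleaner and makes the tightness at $d=1$, $n=2td$ transparent (the packing uses every edge). One small point worth making explicit in a final write-up: extracting $t$ disjoint bases of $\mathcal{R}_d(G)$ from $r_d^t(G)=t\cdot r_d(G)$ yields minimally $d$-rigid spanning subgraphs only after you note that the chain forces $r_d(G)=r_d(K(V))$, i.e.\ that $G$ itself is $d$-rigid; your displayed chain does imply this, but it is the step doing the work.
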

\begin{proof}
    We show that $K_n$ contains $t$ edge-disjoint $d$-rigid spanning subgraphs $G_1,\ldots,G_t$. Let us choose disjoint subsets of vertices $V_i^j,i \in \{1,\ldots,t\}, j \in \{1,2\}$, each of size $d$, and let $X = V(K_n) - \bigcup_{i=1}^t\bigcup_{j=1}^2 V_i^j$. For each $i \in \{1,\ldots,t\}$, let $G_i$ consist of the complete graph on $V_i^1 \cup V_i^2$, plus the edge sets \[\bigcup_{\ell<i}\left((E(V^1_i,V^1_\ell) \cup E(V^2_i,V^2_\ell)\right) \cup \bigcup_{\ell > i}\left( (E(V^1_i,V^2_\ell) \cup E(V^2_i,V^1_\ell) \right) \cup E(V^1_i,X).\] Now $G_i$ is $d$-rigid, since it has a spanning subgraph obtained from a complete graph by adding vertices of degree $d$, and it is easy to see that $G_i$ and $G_j$ are edge-disjoint for $i \neq j$.
\end{proof}

\begin{lemma}\label{lemma:linkedneighbours}
    Let $G = (V,E)$ be a graph and let $v_0 \in V$ be a vertex with $\deg_G(v_0) \geq td+1$. If $r_d^t(G-v_0) = r_d^t(G) - td$, then every pair of vertices $u,v \in N_G(v_0)$ is $\rigmatroidt$-linked in $G-v_0$.     
\end{lemma}
\begin{proof}
    Suppose, for a contradiction, that $\{u,v\}$ is not $\rigmatroidt$-linked in $G-v_0$ for some pair of vertices $u,v \in N_G(v_0)$. This means that $r_d^t(G-v_0+uv) = r_d^t(G-v_0) + 1$. Let $G_0$ be a maximal $\rigmatroidt$-independent subgraph of $G-v_0+uv$; by the previous observation, we must have $uv \in E(G_0)$. We can now obtain an $\mathcal{R}_d^t$-independent subgraph $G'$ of $G$ with $r_d^t(G') = r_d^t(G_0) + td$ by performing a $td$-dimensional edge split on the edge $uv$ in $G_0$. But this means that \[r_d^t(G) \geq r_d^t(G') = r_d^t(G_0) + td = r_d^t(G-v_0) + td + 1 = r_d^t(G) + 1,\] a contradiction.
\end{proof}

\subsection{Tools from probability theory}

We shall need the following standard result from probability theory. We let $\binomialdist{p}$ denote the binomial distribution with parameters $n$ and $p$.

\begin{theorem}(Chernoff bound for the binomial distribution, see, e.g., \cite[Theorem A.1.13]{alon.spencer_2008})  \label{thm:chernoffbound} Let $X\sim \binomialdist{p}$. Then for any $0\leq \eta \leq 1$, \[\Prob(X\leq (1-\eta)np)\;\leq\; e^{-\eta^2np/2}\] 
\end{theorem}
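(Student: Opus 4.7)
The plan is to use Chernoff's classical exponential moment argument. Since we seek a lower tail bound on $X$, the first step is to observe that for any $t > 0$,
\[
\Prob(X \leq (1-\eta)np) = \Prob\!\left(e^{-tX} \geq e^{-t(1-\eta)np}\right) \leq e^{t(1-\eta)np}\,\E\!\left[e^{-tX}\right],
\]
where the inequality is Markov applied to the nonnegative random variable $e^{-tX}$. The key feature of switching to the exponentiated variable is that it turns the sum defining $X$ into a product, allowing independence to be exploited.

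Next, I would estimate the moment generating function. Writing $X = \sum_{i=1}^n X_i$ with $X_i$ independent Bernoulli$(p)$ variables, independence gives
\[
\E\!\left[e^{-tX}\right] = \left(1 - p + p e^{-t}\right)^n = \bigl(1 - p(1-e^{-t})\bigr)^n \leq \exp\!\bigl(-np(1-e^{-t})\bigr),
\]
where the last step uses the elementary inequality $1 + x \leq e^x$. Substituting this back yields
\[
\Prob(X \leq (1-\eta)np) \leq \exp\!\bigl(t(1-\eta)np - np(1-e^{-t})\bigr).
\]

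The final analytic step is to optimize over $t > 0$. Differentiating the exponent in $t$ identifies the minimizer $e^{-t} = 1-\eta$, i.e., $t = \ln\bigl(1/(1-\eta)\bigr)$, and substituting this back produces the sharp bound
\[
\Prob(X \leq (1-\eta)np) \leq \exp\!\bigl(-np(\eta + (1-\eta)\ln(1-\eta))\bigr).
\]
To pass from this to the form stated in the theorem, I would verify the elementary calculus inequality $\eta + (1-\eta)\ln(1-\eta) \geq \eta^2/2$ on $[0,1]$. This is the only genuinely technical step, and it can be checked by noting that both sides and their first derivatives vanish at $\eta = 0$, while the second derivative of the difference equals $\eta/(1-\eta) \geq 0$, so the difference is convex and nonnegative throughout $[0,1)$. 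The rest of the argument is a routine application of Markov's inequality to a well-chosen exponential; there is no substantive combinatorial obstacle.
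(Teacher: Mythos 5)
The paper does not prove this statement; it is cited directly from Alon--Spencer, so there is no in-paper argument to compare against. Your proof is a correct and self-contained derivation via the standard Chernoff exponential-moment method: Markov's inequality applied to $e^{-tX}$, the bound $\E[e^{-tX}] = (1-p(1-e^{-t}))^n \leq \exp(-np(1-e^{-t}))$ from $1+x\leq e^x$, optimization at $e^{-t}=1-\eta$ giving the exponent $-np\bigl(\eta+(1-\eta)\ln(1-\eta)\bigr)$, and the calculus inequality $\eta+(1-\eta)\ln(1-\eta)\geq \eta^2/2$ on $[0,1]$. The last step is verified correctly: the difference $f(\eta)=\eta+(1-\eta)\ln(1-\eta)-\eta^2/2$ satisfies $f(0)=f'(0)=0$ and $f''(\eta)=\eta/(1-\eta)\geq 0$ on $[0,1)$, so $f\geq 0$ there (and the endpoint $\eta=1$ holds by continuity, with the usual convention $0\ln 0=0$). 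This is essentially the textbook argument; no gaps.
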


We shall also use the following technical lemma.

\begin{lemma}\label{lemma:ordering}
Let $S$ be a finite set of size at least $d$ and fix $s\in S$. Let $\pi $ be a uniformly random ordering of $S$, and let $f(\pi)$ denote the number of elements of $S$ that precede $s$ in $\pi$. Then we have \[\E\big(\min (d, f(\pi))\big)=d - \frac{1}{2}\cdot \frac{d(d+1)}{|S|}.\]
\end{lemma}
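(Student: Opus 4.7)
Set $n = |S|$. My plan is to first reduce the problem to a one-dimensional distribution, and then evaluate the expectation via a simple indicator / layer-cake decomposition that avoids any case analysis.

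The first step is to note that under a uniformly random ordering $\pi$ of $S$, the position of $s$ in $\pi$ is uniformly distributed on $\{1,2,\dots,n\}$, so that $f(\pi)$ (the number of elements preceding $s$) is uniformly distributed on $\{0,1,\dots,n-1\}$. This follows immediately from symmetry: conditioning on the set of $|S|-1$ elements placed relative to $s$ gives each rank for $s$ equal probability. After this reduction, all information about $S \setminus \{s\}$ is irrelevant, and we only need the distribution of a uniform random variable on $\{0,1,\dots,n-1\}$ truncated at $d$.

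The second step is to compute $\E[\min(d, f(\pi))]$ using the identity
\[
\min(d, f(\pi)) \;=\; \sum_{j=1}^{d} \mathbf{1}[f(\pi) \geq j],
\]
which holds for any nonnegative integer-valued random variable. By linearity of expectation and the fact (from step one) that $\Prob(f(\pi) \geq j) = (n-j)/n$ for $1 \le j \le d \le n$, we get
\[
\E[\min(d, f(\pi))] \;=\; \sum_{j=1}^{d} \frac{n-j}{n} \;=\; d - \frac{1}{n}\sum_{j=1}^{d} j \;=\; d - \frac{d(d+1)}{2n},
\]
which is the claimed formula. (An equivalent computation, if one prefers to avoid the indicator trick, is to split the sum $\frac{1}{n}\sum_{k=0}^{n-1} \min(d,k)$ at $k = d$, evaluate $\sum_{k=0}^{d-1} k = \binom{d}{2}$ on the first piece and $d(n-d)$ on the second, and simplify.)

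There is essentially no obstacle here: the only point to check is that the hypothesis $|S| \geq d$ guarantees that each probability $(n-j)/n$ with $1 \leq j \leq d$ is indeed a legitimate probability, so that the indicator decomposition applies without boundary issues. The entire argument is short enough that the write-up can be essentially just the two displayed equations above together with the one-sentence symmetry justification.
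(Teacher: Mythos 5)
Your proof is correct. The reduction to the uniform distribution of $f(\pi)$ on $\{0,\ldots,n-1\}$ matches the paper exactly, but your primary computation uses the tail-sum (layer-cake) identity $\min(d,f(\pi)) = \sum_{j=1}^{d} \mathbf{1}[f(\pi)\geq j]$ together with $\Prob(f(\pi)\geq j) = (n-j)/n$, whereas the paper computes $\E[\min(d,f(\pi))]$ by summing $\frac{1}{n}\min(d,i)$ directly over $i=0,\ldots,n-1$ and splitting the sum at $i=d$. Both arguments are one-line elementary computations; the tail-sum route has the minor advantage of sidestepping the case split and making the $d - \frac{1}{n}\sum_{j=1}^d j$ form appear immediately, while the paper's direct sum is perhaps more transparent about where the two contributions $\binom{d+1}{2}$ and $d(n-1-d)$ come from. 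The alternative you sketch in parentheses is in fact essentially the paper's own proof (up to whether the boundary index $i=d$ is placed in the first or second piece), so you have effectively covered both routes.
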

\begin{proof} Clearly, $f(\pi)$ is uniformly randomly distributed on $\{0,\ldots,|S|-1\}$. Thus we have
\begin{align*}
  \E\big(\min (d, f(\pi))\big) &= \sum_{i=0}^{|S|-1}\Prob(f(\pi)=i) \cdot \min(d, i) = \sum_{i=0}^{|S|-1}\frac{1}{|S|} \cdot \min(d, i) \\
  &= \frac{1}{|S|}\left(\binom{d+1}{2} + d(|S| - 1 - d) \right) \\
 &= \frac{1}{|S|}\left(d(|S|-1)-\frac{d(d-1)}{2}\right) \\ &= \frac{1}{|S|}\left(d|S|-\frac{d(d+1)}{2}\right) 
\\ &= d - \frac{1}{2}\cdot \frac{d(d+1)}{|S|}.\tag*{\qedhere}
\end{align*}
\end{proof}

\section{Packing rigid spanning subgraphs}\label{section:packing}
In this section we prove \cref{theorem:main1}. 
If $d\leq 2$ or $t=1$, then the statement is true by \cref{theorem:jordan,theorem:soma}. Thus we will assume that $d\geq 3$ and $t\geq 2$. (The proof also works for $d \leq 2$ or $t=1$, but the bound we obtain is weaker.) 

We remark that the factor 10 is rather arbitrary and a more rigorous analysis of the argument presented here would yield a slightly better constant. For a lower bound on vertex-connectivity required in \cref{theorem:main1}, see \cref{sec:concluding}.

The following is the main lemma for our proof of \cref{theorem:main1}.

\begin{lemma}\label{redundvertex}
Let $d,t$ be integers with $d\geq 3$ and $t\geq 2$, and let $G = (V,E)$ be a graph. If the minimum degree of $G$ is at least $(t\cdot 10d(d+1))$, then $G$ has a vertex $v_0\in V$ such that $r_d^t(G-v_0) = r_d^t(G) - td$.
\end{lemma}
\begin{proof}
Our goal is to construct a maximal $\rigmatroidt$-independent subgraph $G'$ of $G$ with minimum degree $td$. As we will see, the existence of such a subgraph quickly implies the lemma.

Fix an orientation $\vec{G}$ of $G$ such that $\delta_{\vec{G}}(v)\geq \lfloor\deg_G(v)/2\rfloor$ for each $v\in V$. It is a well-known result that such an orientation exists (see, e.g., \cite[Theorem 1.3.8]{book}). Then $\delta_{\vec{G}}(v) \geq t\cdot 5d(d+1)$ for each $v\in V$. For a subset $F$ of $E$ let $\vec{F}$ denote the corresponding set of oriented edges. Recall that $N^+_{\vec{F}}(v)$ denotes the set of vertices $u\in V$ for which $vu\in \vec{F}$. 

Let $U$ be a random subset of $V$ such that each $v\in V$ is in $U$ independently with probability $1/2$.
  For each $j \in \{1,\dots, t\}$, we recursively define a random subgraph $H_j=(U,F_j)$ of $G[U]$ as follows. 
Suppose that $H_1,\dots, H_{j-1}$ are already given. Let \[E_j= E(G[U]) - \left(F_1 \cup \ldots \cup F_{j-1}\right).\] Consider a uniformly random ordering $\pi_j$ of the vertices in $U$. 
For each $v\in U$, let 
$A_j(v)=\big\{u\in N^+_{\vec{E}_j}(v): u \text{ precedes } v \text{ in } \pi_j\big\}$, and fix a subset $B_j(v) \subseteq A_j(v)$ of size $\min(d,|A_j(v)|)$. 
Finally, let
\[F_j=\bigcup_{v\in U}\{vu:{u\in B_j(v)}\}.\]

Let $F=\bigcup_{j=1}^t F_j$ and  $H=(U,F)$. For each $v\in V-U$, add $v$  to $H$ and connect it with $\min(td,|N^+_{\vec{ G}}(v)\cap U|)$ vertices of $N^+_{\vec{ G}}(v)\cap U$. Let the resulting graph be denoted by $G_0=(V,E_0)$.

\begin{claim}\label{claimindep}
$G_0$ is $\rigmatroidt$-independent.
\end{claim}
\begin{proof}
For each $j\in \{1,\dots t\}$, the graph $H_j$ can be obtained by taking a graph on one vertex and then adding new vertices, one at a time, of degree at most $d$. 
Hence, $H_j$ is $\mathcal{R}_d$-independent. It follows that $H$ is $\mathcal{R}_d^t$-independent. $G_0$ can be obtained from $H$ by adding new vertices of degree at most $td$ and thus $G_0$ is also $\mathcal{R}_d^t$-independent.
\end{proof}

\begin{claim}\label{claimEE0}
$\E(|E_0|)\geq \left(td-\frac{1}{4}\right)n$.
\end{claim}

\begin{proof}
For convenience, we define $n=|V|$ and $k=5d(d+1)$. Let $\eta\leq \frac{1}2$ be a parameter to be chosen later, and let $r=(1-\eta)\frac{k}{2}$.

Let us fix a vertex $v \in V$. Using the Chernoff bound (\cref{thm:chernoffbound}) we obtain that
\begin{equation}\label{eq:smallneighbors}
\Prob\Big(|N^+_{\vec{ G}}(v)\cap U|\leq tr \Big) = \Prob\left(|N^+_{\vec{ G}}(v)\cap U|\leq (1-\eta) \frac{tk}{2} \right) 
\;\leq \; 
e^{-\eta ^2tk/4 }.
\end{equation}
Let $Q$ denote the event that $v\in U$ and $|N^+_{\vec{ G}}(v)\cap U|>tr$.\\ If $Q$ holds, then $|N^+_{\vec{ E}_j}(v)|\geq tr-td> d$ for every $j\in \{1,\dots,t\}$. \\ Note that $\delta_{\vec{F}_j}(v)=|B_j(v)|=\min(d,|A_j(v)|)$. Thus it follows from 
\cref{lemma:ordering} that
\[\E\Big(\delta_{\vec{ F}_j}(v) \;\Big|\; Q \Big) 
\geq d-\frac{1}{2}\cdot \frac{d(d+1)}{tr-td+1}\geq d-\frac{d(d+1)}{2t(r-d)},\] and hence
\[\E\Big( \delta_{\vec{ F}}(v) \;\Big|\; Q \Big) = \E\Big( \sum_{j=1}^t\delta_{\vec{ F_j}}(v) \;\Big|\; Q \Big) = \sum_{j=1}^t \E\Big( \delta_{\vec{ F_j}}(v) \;\Big|\; Q \Big) \geq td-\frac{d(d+1)}{2(r-d)}.\]
Equation \cref{eq:smallneighbors} and the fact that the two sub-events in the definition of $Q$ are independent together imply $\Prob(Q) \geq \frac{1}{2} \Big(1-e^{-\eta ^2tk/4}\Big)$. Hence,
\[\E\big(\delta_{\vec{ F}}(v)\big)\geq \E\big(\delta_{\vec{F}}(v) \big| Q) \cdot \Prob(Q) \geq \frac{1}{2} \left(1-e^{-\eta ^2tk/4}\right)\left(td-\frac{d(d+1)}{2(r-d)}\right).\]
After summing over the vertices, we get
$$\E(|F|) = \E\bigg(\sum_{v \in V}\delta_{\vec{F}}(v) \bigg) = \sum_{v \in V}\E\big(\delta_{\vec{F}}(v) \big)\geq \frac{1}{2}\left(1-e^{-\eta ^2tk/4}\right)\left( td-\frac{d(d+1)}{2(r-d)}\right)n.$$
Let $D$ denote $E_0-F$. Then $$\E\bigg(\delta_{\vec{D}}(v)\;\bigg|\; v\notin U \;\land\; |N_{\vec{ G}}(u)\cap U|>tr \bigg)= td.$$
Thus by \cref{eq:smallneighbors} we have
 \[\E\big(\delta_{\vec{D}}(v)\big)\geq \frac{1}{2}\left(1-e^{-\eta ^2tk/4}\right)td,\] and by summing over the vertices we obtain
 \[\E(|D|)\geq \frac{1}{2}\left(1-e^{-\eta ^2tk/4}\right) tdn.\]
It follows that 
\begin{align*}
\E(|E_0|)=\E(|F|)+\E(|D|)
&\geq \left(1-e^{-\eta ^2tk/4}\right)\left( td-\frac{d(d+1)}{4(r-d)}\right)n\\
&\geq tdn-\left(   e^{-\eta ^2tk/4}\cdot td  + \frac{d(d+1)}{4(r-d)} \right)n\\
&\geq tdn-\left(   e^{\frac{-\eta ^2tk}{4}+\frac{td}{3}}  + \frac{d(d+1)}{4(r-d)} \right)n,
\end{align*}
where the last inequality follows from the fact that $td\geq 6$ and thus $td<e^{td/3}$.

Let us now fix the value of $\eta$ to be $0.45$. With this choice it is easy to verify that \[\frac{\eta ^2tk}{4} - \frac{td}{3} 
= \left(\frac{\eta^2}{4}-\frac{1}{5(d+1)\cdot 3}\right)t\cdot 5d(d+1)\geq \left(\frac{0.45^2}{4}-\frac{1}{60}\right) 120 =  4.075,\]
where for the inequality we used, again, the assumption that $d \geq 3$ and $t \geq 2$. Note that $d \geq 3$ also implies $d \leq \frac{1}{4}d(d+1)$, and hence
\[r-d=0.55\cdot \frac{1}{2}\cdot 5\cdot d(d+1)-d \geq 1.375\, d(d+1) -0.25\, d(d+1)= 1.125\, d(d+1).\] Thus $$\E(|E_0|)\geq tdn-\left(e^{-4.075}+\frac{1}{4\cdot 1.125}\right)n\geq \left(td-\frac{1}{4}\right)n,$$
which completes the proof of the claim.
\end{proof}

 It follows from \cref{claimEE0} that $$\E\left(|E_0|+\frac{|V-U|}{2}\right) = \E(|E_0|)+\frac{1}{4}n\geq tdn.$$ 
Hence, there exist a set of vertices $U\subseteq V$ and a corresponding spanning subgraph $G_0=(V,E_0)$ such that $$|E_0|+\frac{|V-U|}{2}\geq tdn.$$
Since $E_0$ is $\mathcal{R}_d^t$-independent by \cref{claimindep}, we can extend it to a maximal $\mathcal{R}_d^t$-independent subgraph $G' = (V,E_0 \cup E_1)$ of $G$ by adding a suitable set of edges $E_1 \subseteq E-E_0$.
Then $|E_0 \cup E_1| = r_d^t(G)$, and thus \[|E_1|= r_d^t(G) - |E_0| \leq tdn-t\binom{d+1}{2}-|E_0|\leq \frac{|V-U|}{2}-t\binom{d+1}{2}<\frac{|V-U|}{2},\] 
where in the first inequality we used the fact that $r_d^t(K_n) = tdn - t\binom{d+1}{2}$, which follows from \cref{lemma:tdrigid}.

Hence, there is some vertex $v_0\in V-U$ that is not incident to any edge in $E_1$. 
It follows from the construction of $G_0$ that $d_{G'}(v_0) \leq td$, and thus \[r_d^t(G-v_0) \geq r_d^t(G'-v_0) \geq r_d^t(G) - td.\] On the other hand, since $\deg_G(v_0) \geq td$, and the addition of a vertex of degree $td$ preserves $\rigmatroidt$-independence, we must have $r_d^t(G) \geq r_d^t(G-v_0) + td$. It follows that $r_d^t(G-v_0) = r_d^t(G) - td$.
\end{proof}

\begin{proof}[Proof of \cref{theorem:main1}]
As we noted before, if $d\geq 2$ or $t=1$, then the statement follows from \cref{theorem:jordan,theorem:soma}. Hence it suffices to prove in the case when $d\geq 3, t\geq 2$. 
We prove the statement by induction on the number of vertices. Let $c = t\cdot 10d(d+1)$. If $|V|=c+1$, then $G$ is complete and thus $\mathcal{R}_d^t$-rigid. It follows from \cref{lemma:tdrigid} that $G$ contains $t$ edge-disjoint $d$-rigid spanning subgraphs. 

Suppose now that $|V|>c+1$. By \cref{redundvertex}, there is some vertex $v_0\in V$ such that $r_d^t(G-v_0) = r_d^t(G)-td$.
Let us consider the graph $G' = G-v_0+K(N_G(v_0))$. 
On the one hand, by \cref{lemma:linkedneighbours} and the choice of $v_0$, every pair of vertices $u,v \in N_G(v_0)$ is $\rigmatroidt$-linked in $G-v_0$, and hence $G-v_0$ is $\mathcal{R}_d^t$-rigid if and only if $G'$ is. On the other hand, $G'$ is $c$-connected: indeed, it arises from the $c$-connected graph $G+K(N_G(v_0))$ by deleting a vertex whose neighbor set is a clique, and it is easy to see that deleting such a vertex preserves $c$-connectivity (except for the complete graph on $c+1$ vertices). Hence, by induction, $G'$ is $\mathcal{R}_d^t$-rigid, and thus so is $G-v_0$. 

Since $|V| \geq c+1 \geq 2td+1$, \cref{lemma:tdrigid} implies that $G-v_0$ contains $t$ edge-disjoint $d$-rigid spanning subgraphs. Adding a vertex of degree at least $td$ to $G-v_0$ corresponds to adding a vertex of degree at least $d$ to each of these subgraphs, an operation that preserves $d$-rigidity. Since $\deg_G(v_0) \geq c \geq td$, we conclude that $G$ contains $t$ edge-disjoint $d$-rigid spanning subgraphs, as claimed. 
\end{proof}

\section{Highly connected orientations}\label{section:Thomassen}

In this section we prove \cref{theorem:main2}.
The following ``orientation lemma'' will be a key ingredient in our proof. Given a graph $G = (V,E)$ and a function  $g:V \to \mathbb{Z}_+$, we shall use the notation $g(X) = \sum_{v \in X} g(v)$ for subsets $X \subseteq V$.

\begin{theorem} (Hakimi \cite{hakimi_1965})
\label{hakimi}
Let $G=(V,E)$ be a graph and let $g:V \to \mathbb{Z}_+$ be a function. Then $G$ has an orientation $\DD$ in which
$\rho_\DD(v) = g(v)$ for all $v\in V$ if and only if
\begin{enumerate}
    \item $i_G(X)\leq g(X)$ for all nonempty $X\subseteq V$, and
    \item $|E|=g(V)$ hold. 
\end{enumerate}
\end{theorem}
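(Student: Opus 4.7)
The plan is to handle necessity by a direct counting argument and sufficiency by reducing to a bipartite matching problem that is solved via Hall's theorem.

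Necessity is essentially immediate. If $\vec{G}$ is an orientation of $G$ with $\rho_{\vec{G}}(v) = g(v)$ for every $v$, then summing yields $g(V) = \sum_{v \in V} \rho_{\vec{G}}(v) = |E|$, which is (2). For (1), fix a non-empty $X \subseteq V$; every edge of $G[X]$ has both endpoints in $X$, so its head under $\vec{G}$ also lies in $X$, and hence $i_G(X) \leq \sum_{v \in X} \rho_{\vec{G}}(v) = g(X)$.

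For sufficiency, I will construct an auxiliary bipartite graph $B$ with vertex classes $E$ and $S = \bigsqcup_{v \in V} S_v$, where each $S_v$ is a set of $g(v)$ ``slots'' associated with $v$. Each edge $e = uv \in E$ is joined in $B$ to every slot in $S_u \cup S_v$. Condition (2) guarantees $|E| = g(V) = |S|$, so any matching of $B$ that saturates $E$ is automatically perfect. Such a matching yields the required orientation by the rule: orient $e$ toward $v$ whenever $e$ is matched to a slot in $S_v$; then $v$ receives exactly $|S_v| = g(v)$ incoming edges. To verify Hall's condition, take $F \subseteq E$ and let $V(F) \subseteq V$ be the set of endpoints of edges in $F$; then $N_B(F) = \bigsqcup_{v \in V(F)} S_v$ has size $g(V(F))$, and since $F \subseteq E(G[V(F)])$, applying (1) with $X = V(F)$ gives $|F| \leq i_G(V(F)) \leq g(V(F)) = |N_B(F)|$, exactly the Hall condition.

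The main obstacle is essentially cosmetic: the bipartite graph must be set up so that Hall's condition translates verbatim into (1). A self-contained alternative would be an exchange argument directly on orientations---starting from an arbitrary orientation, repeatedly reverse a directed path from a vertex with excess in-degree to one with deficit, using (1) to show that such a path exists---but this requires more care to formalize and the bipartite reduction is both shorter and conceptually cleaner.
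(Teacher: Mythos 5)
The paper does not prove this theorem: it is stated as a citation to Hakimi's 1965 result and used as a black box. Your proof is therefore not being measured against a proof in the paper, but against the statement itself, and on that score it is correct and complete. Necessity is the standard counting argument: summing in-degrees over all of $V$ gives $|E|=g(V)$, and summing over $X$ overcounts $i_G(X)$ since edges of $G[X]$ have their heads in $X$. For sufficiency, your bipartite-matching reduction is clean: the disjointness of the slot sets $S_v$ makes $N_B(F)=\bigsqcup_{v\in V(F)} S_v$ exact, so Hall's condition for saturating $E$ is literally $|F|\leq g(V(F))$, which follows from $F\subseteq E(G[V(F)])$ together with condition (1); condition (2) then forces the $E$-saturating matching to be perfect, so every slot is filled and each in-degree is exactly $g(v)$. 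One small point worth stating explicitly: conditions (1) and (2) together already imply $g(v)\leq \deg_G(v)$ (apply (1) to $X=V-v$ and subtract from (2)), so the construction is never asked to do the impossible; your argument handles this implicitly via the perfectness of the matching, but it is reassuring to note. This Hall-type proof is one of the standard routes to Hakimi's orientation theorem; other common proofs go through network flows or through a direct path-reversal exchange argument of the kind you sketch as an alternative, and all of these are essentially equivalent reformulations of the same combinatorial content.
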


The same conditions are equivalent to the existence of an orientation in which $g$ specifies the out-degrees.

We shall consider degree-specified orientations of minimally $d$-rigid graphs. 
Given a minimally $d$-rigid graph $G=(V,E)$ with $|V|\geq \binom{d+1}{2}$ and a subset
$R\subseteq V$ with $|R|=\binom{d+1}{2}$, we define the 
in-degree specification function $g_{d,R}$ by putting
$g_{d,R}(v)=d$ for all $v\in V-R$ and $g_{d,R}(r)=d-1$ for all $r\in R$.
We say that an orientation $\DD$ of $G$ is a
{\it $(d,R)$-orientation} if its in-degrees respect the specification $g_{d,R}$.

\begin{lemma}
\label{l1}
Let $G=(V,E)$ be a minimally $d$-rigid graph with $|V|\geq \binom{d+1}{2}$ and let 
$R\subseteq V$ be a set of vertices with $|R|=\binom{d+1}{2}$. Then $G$ has a
$(d,R)$-orientation.
\end{lemma}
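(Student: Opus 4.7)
The plan is to apply Hakimi's orientation theorem (\cref{hakimi}) directly with the in-degree specification $g_{d,R}$. To do so I would verify the two conditions of that theorem, after which the desired $(d,R)$-orientation exists.

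First I would check the degree sum condition \textit{(b)}. A direct computation gives
\[
g_{d,R}(V) \;=\; d\bigl(|V|-|R|\bigr) + (d-1)|R| \;=\; d|V| - |R| \;=\; d|V| - \binom{d+1}{2}.
\]
Since $G$ is minimally $d$-rigid on $|V| \geq \binom{d+1}{2} \geq d$ vertices, property \textit{(b)} of the rigidity matroid from the preliminaries yields $|E| = d|V| - \binom{d+1}{2}$, matching $g_{d,R}(V)$.

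Next I would verify the submodular-style inequality \textit{(a)}: $i_G(X) \leq g_{d,R}(X)$ for every nonempty $X \subseteq V$. I would split into two cases according to the size of $X$. If $|X| \geq d$, the sparsity bound \cref{eq:sparse} applied to $G$ (which holds because $G$ is $\mathcal{R}_d$-independent) gives $i_G(X) \leq d|X| - \binom{d+1}{2}$. On the other hand $g_{d,R}(X) = d|X| - |X \cap R| \geq d|X| - |R| = d|X| - \binom{d+1}{2}$, so the inequality holds. If instead $1 \leq |X| < d$, the simplicity of $G$ only gives $i_G(X) \leq \binom{|X|}{2}$, while $g_{d,R}(X) \geq (d-1)|X|$; the desired bound $\binom{|X|}{2} \leq (d-1)|X|$ reduces to $|X| \leq 2d-1$, which is immediate from $|X| \leq d-1$.

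Having verified both conditions, \cref{hakimi} produces an orientation of $G$ with the prescribed in-degrees, i.e., a $(d,R)$-orientation. There is no real obstacle here: the only thing to be careful about is that the sparsity bound \cref{eq:sparse} is only stated for $|X| \geq d$, which is precisely why the small-$X$ case is handled separately using the trivial bound $\binom{|X|}{2}$. Once this split is made, the rest is a routine verification.
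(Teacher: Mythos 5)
Your proof is correct and follows essentially the same approach as the paper's: verify Hakimi's two conditions, using minimal $d$-rigidity for the degree-sum, the sparsity bound \cref{eq:sparse} for $|X|\ge d$, and the trivial simple-graph bound $\binom{|X|}{2}$ for $|X|<d$. The only cosmetic difference is that you bound $g_{d,R}(X)$ from below by $(d-1)|X|$ in the small case, while the paper bounds it via $d|X|-|R\cap X|$; both lead to the same arithmetic.
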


\begin{proof}
We have to verify that the conditions of \cref{hakimi} are satisfied.
Let $g=g_{d,R}$.
As $|V|\geq \binom{d+1}{2}\geq d$, we have $|E|=d|V|-\binom{d+1}{2}=g(V)$.
Now consider a set $X\subseteq V$ with $|X|\geq d$.
By \cref{eq:sparse} we have
$$i_G(X)\leq d|X|-\binom{d+1}{2} \leq d|X|- |R\cap X| = g(X),$$
as required. 
Next, consider a set $X\subseteq V$ with $|X|\leq d-1$.
Then,
since $G$ is simple, we have $i_G(X)\leq \binom{|X|}{2}= \frac{|X|(|X|-1)}{2} \leq |X|(d-1)\leq g(X)$, which completes the proof.
\end{proof}

The next lemma provides a lower bound on the number of in-neighbors of certain subsets in $(d,R)$-orientations, 
establishing a link between degree-specified orientations and high vertex-connectivity.

\begin{lemma}
\label{l2}
Let $d$ and $k$ be integers with $k \geq 2$ and $d \geq 4k-4$. Let $G=(V,E)$ be a minimally $d$-rigid graph with $|V|\geq \binom{d+1}{2}$, 
$R \subseteq V$ a set of vertices with $|R|=\binom{d+1}{2}$, and let 
$\DD$ be a $(d,R)$-orientation of $G$. Finally, let $X \subseteq V$ be a set of vertices. If $|X\cap R|\leq \frac{{\binom{d+1}{2}}}2$,
then $X$ has at least $k$ in-neighbors in $\DD$.
\end{lemma}

\begin{proof}
Let $W$ denote the set of in-neighbors of $X$ in $\DD$ and let $R_X=X\cap R$. 
First, assume that $|X|\geq d$. We have $\rho_\DD(v)=d$ for all $v\in X-R_X,$ and $\rho_\DD(v)=d-1$ for all $v\in R_X$. Thus
\[d|X|-|R_X| = \sum_{v\in X}\rho_\DD(v) \leq i_G(X\cup W)\leq d|X\cup W|-\binom{d+1}{2},\] where the last inequality follows from \cref{eq:sparse}. Hence, $d|W|\geq \binom{d+1}{2}-|R_X|\geq \frac{\binom{d+1}{2}}{2}$
and thus $|W|\geq \frac{d+1}{4}> k-1$.

Next, assume that $|X|\leq d-1$. Note that we have
\[ \rho_\DD(X) = \Big(\sum_{v\in X} \rho_\DD(v)\Big) - i_G(X) \geq d|X|-|R_X|- \binom{|X|}{2} = |X| \left(d - \frac{|X|-1}{2}\right)-|R_X|.\]
Since each in-neighbor of $X$ can send at most $|X|$ edges to $X$ in $\DD$, we also have $|X| |W|\geq \rho_\DD(X)$.
It follows that \[|W|\geq d - \frac{|X|-1}{2} -\frac{|R_X|}{|X|} \geq d - \frac{d-2}{2} -1 = \frac{d}{2} \geq 2k - 2\geq k,\] as desired.
\end{proof}

We are now ready to prove Thomassen's conjecture. 
 
\begin{proof}[Proof of \cref{theorem:main2}]
Since the $k=1$ case is settled by the theorem of Robbins, we may assume that $k\geq 2$. 
Let $d=4k-4$.
We shall prove that if a graph $G = (V,E)$ has
two edge-disjoint minimally $d$-rigid spanning subgraphs,
then $G$ has a $k$-connected orientation. 
Note that \cref{theorem:main1} guarantees the existence of such subgraphs 
if $G$ is $(320 k^2-560k+240)$-connected. 

Suppose that $G$ has
two edge-disjoint minimally $d$-rigid spanning subgraphs 
$G_1$ and $G_2$. 
Let us fix a set $R \subseteq V$ of vertices with $|R|=\binom{d+1}{2}$.
We construct the orientation $\DD$ of $G$ by defining the orientations of $G_1$ and $G_2$, and then orienting the remaining edges arbitrarily.
The orientation of $G_1$ is chosen to be a $(d,R)$-orientation.
The orientation of $G_2$ is chosen to be a {\em reversed} $(d,R)$-orientation (in other words, its out-degrees respect
the $(d,R)$-specification).
By \cref{l1}, these orientations exist.

It remains to show that the union $\DD$ of these oriented spanning subgraphs
is $k$-connected.
Suppose not; then there is a subset $S\subseteq V$ with $|S|\leq k-1$ for which $\DD-S$ is not strongly connected.
This means that there is a set $X\subseteq V$ with $V-X-S\not= \varnothing$ and whose in-neighbors are all in $S$ (and hence the
out-neighbors of the set $V-X-S$ are all in $S$). 
If $|X\cap R|\leq \frac{\binom{d+1}{2}}{2}$, then \cref{l2}, applied to the $(d,R)$-orientation of $G_1$,
gives a contradiction. Otherwise, $|(V-X-S)\cap R|\leq \frac{\binom{d+1}{2}}{2}$, and we can apply the
lemma to the orientation of $G_2$ to obtain a contradiction.
\end{proof}

\section{Removable spanning trees}\label{section:Kriesell}

In this section we prove \cref{theorem:Kriesellimproved}. Our proof is an adaptation of the proof of \cref{theorem:soma} from \cite{soma} to the union of the generic $d$-dimensional rigidity matroid and the graphic matroid. The same method can be applied to the $t$-fold union of the rigidity matroid to show that sufficiently highly connected graphs are $\rigmatroidt$-rigid. However, the bound on the required connectivity obtained in this way is quadratic in $t$, in contrast to the linear bound given by \cref{theorem:main1}. 

We recall the following combinatorial lemma from \cite{soma}. For a positive integer $n$, we let $[n] = \{1,\ldots,n\}$, and for a set $X$ and a nonnegative integer $i$, we let $\binom{X}{i}$ denote the family of subsets of $X$ of size $i$.

\begin{lemma}\cite[Lemma 2.5]{soma}\label{lemma:combinatorial}
    Let $n,r,\ell,m$ be nonnegative integers with $\ell+1 \leq m \leq n-1$. Suppose that $H_1,\ldots,H_r$ are distinct proper subsets of $\{1,\ldots,n\}$ with $|H_i \cap H_j| \leq \ell-2$ for every $1 \leq i < j \leq r$. Then
    \[\left| \left\{ S \in \binom{[n]}{m}: \exists j \in \{1,\ldots,r\}, S \subseteq H_j \right\} \right| \leq \binom{n-1}{m}.\]
\end{lemma}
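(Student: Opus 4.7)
My plan is to prove the bound by induction on $n$, combining two applications of the inductive hypothesis via Pascal's identity $\binom{n-1}{m} = \binom{n-2}{m} + \binom{n-2}{m-1}$. The starting observation is that the intersection bound $|H_i \cap H_j| \leq \ell - 2$, together with $m \geq \ell + 1$, forces each $m$-subset $S \in \mathcal{S}$ to lie in at most one $H_j$ (otherwise $|H_i \cap H_j| \geq m > \ell-2$, a contradiction). Thus sets $H_j$ of size less than $m$ contribute nothing and may be discarded, so I will assume throughout the proof that $|H_j| \geq m$ for every $j$. This reduction is crucial for a later step: it rules out the situation $H_i = H_j \cup \{x\}$ with $H_i \neq H_j$ (else $|H_i \cap H_j| = |H_j| \geq m > \ell-2$), which is precisely what ensures distinctness survives the restriction to $[n] \setminus \{x\}$.

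For the base case $n = m+1$, any contributing $H_j$ must equal $[n] \setminus \{y\}$ for some $y$, and two such sets share $m - 1 > \ell - 2$ elements, so at most one can appear, giving $|\mathcal{S}| \leq 1 = \binom{m}{m}$. In the inductive step ($n \geq m+2$), I will first handle the trivial subcases $r \leq 1$ and $\ell = 0$ directly. Otherwise, I note that at most one $H_j$ can have size $n-1$ (two such sets would share $n - 2 > \ell - 2$ elements since $\ell \leq m - 1 \leq n - 2$), and I choose $x \in [n]$ to lie in this unique maximal-size set $H^*$ if it exists, and arbitrarily otherwise. With this choice, every $H_j \setminus \{x\}$ is a proper subset of $[n] \setminus \{x\}$. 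I then split $\mathcal{S}$ into the parts $\mathcal{S}_{\bar{x}} = \{S \in \mathcal{S} : x \notin S\}$ and $\mathcal{S}_x = \{S \in \mathcal{S}: x \in S\}$.

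To bound $|\mathcal{S}_{\bar{x}}|$, I will apply the inductive hypothesis to $\{H_j \setminus \{x\}\}_{j=1}^r$ on ground set $[n] \setminus \{x\}$ with parameters $(n-1, m, \ell)$, which yields $|\mathcal{S}_{\bar{x}}| \leq \binom{n-2}{m}$. To bound $|\mathcal{S}_x|$, I will write each $S = \{x\} \cup S'$ with $S' \in \binom{[n] \setminus \{x\}}{m-1}$ and observe that $S \subseteq H_j$ is equivalent to $x \in H_j$ together with $S' \subseteq H_j \setminus \{x\}$; the inductive hypothesis applied to the subfamily $\{H_j \setminus \{x\} : x \in H_j\}$ with parameters $(n-1, m-1, \ell-1)$ (pairwise intersections drop by one to at most $\ell - 3 = (\ell-1) - 2$) then gives $|\mathcal{S}_x| \leq \binom{n-2}{m-1}$. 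Summing and invoking Pascal's identity closes the induction.

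The main delicate point throughout is verifying that the restricted families are indeed collections of distinct proper subsets of $[n] \setminus \{x\}$: distinctness is secured by the reduction to $|H_j| \geq m$ (as discussed in the first paragraph), while properness of $H_j \setminus \{x\}$ is secured by the careful choice of $x$ relative to a possible size-$(n-1)$ set $H^*$. The remaining hypotheses of the lemma, in particular the intersection bound and the inequality $\ell + 1 \leq m \leq n - 1$, transfer cleanly to both inductive applications.
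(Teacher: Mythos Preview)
The paper does not give its own proof of this lemma: it is quoted verbatim from \cite{soma} and used as a black box. So there is nothing in the present paper to compare your argument against.

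That said, your inductive argument is correct. A few small points worth tightening in a final write-up: (i) the induction is really on $n$ with the statement quantified over all admissible $m,\ell,r$; your ``base case $n=m+1$'' is more accurately the subcase $m=n-1$ within the induction on $n$, while the ``inductive step $n\ge m+2$'' is the subcase $m\le n-2$ (both reductions land at $n-1$, so the induction closes); (ii) the side case $\ell=0$ you single out is already subsumed by $r\le 1$, since $|H_i\cap H_j|\le -2$ is impossible, and the same remark applies to $\ell=1$; (iii) the reduction to $|H_j|\ge m$ is applied to the \emph{current} family at each stage of the induction, not once and for all --- your distinctness argument for $\{H_j\setminus\{x\}\}$ uses exactly this. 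With these clarifications the proof is complete.
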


We also recall the central construction of \cite{soma}.
Let us fix $D \geq 2$. Let $G = (V,E)$ be a graph and $\pi = (v_1,\ldots,v_n)$ an ordering of the vertices of $G$. Let us define
\[\Npi(v_i) = \{u \in N_G(v_i): u \text{ precedes } v_i \text{ in } \pi\},\] and let $\degpi(v_i)$ denote $|\Npi(v_i)|$.

We construct a subgraph $G_\pi^D = (V,E^D_\pi)$ of $G$ according to the following rules.

\begin{enumerate}
    \item If $\degpi(v_i) \leq D,$ then in $G_\pi^D$ we connect $v_i$ with every vertex of $\Npi(v_i)$.
    \item If $\degpi(v_i) \geq D+1$ and $\Npi(v_i)$ induces a clique in $G$, then in $G^D_\pi$ we connect $v_i$ with $D$ vertices of $\Npi(v_i)$.
    \item If $\degpi(v_i) \geq D+1$ and $\Npi(v_i)$ does not induce a clique in $G$, then in $G_\pi^D$ we connect $v_i$ with $D+1$ vertices of $\Npi(v_i)$, including two vertices $x$ and $y$ that are not adjacent in $G$.
\end{enumerate}

The following lemma is an adaptation of the main technical lemma in \cite{soma}. 

\begin{lemma}\label{lemma:main}
    Let $G = (V,E)$ be a graph and let $k_0,\ell$ and $D$ be positive integers. Suppose that for each $v \in V$, $\deg_G(v) \geq k_0$, $N_G(v)$ does not induce a clique in $G$, and that if $H_1$ and $H_2$ are the vertex sets of two different maximal cliques of $G[N_G(v)]$, then $|H_1 \cap H_2| \leq \ell-2$. Let $\pi$ be a uniformly random ordering of $V$. Finally, suppose that $k_0,\ell$ and $D$ satisfy the inequality \[k_0^2 + k_0(1-D(D+1)) - \ell(\ell+1) \geq 0.\] Then $\EE(|E^D_\pi|) \geq D|V|$. 
\end{lemma}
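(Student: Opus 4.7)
The plan is to show that for each vertex $v \in V$, the expected number of edges incident to $v$ that get added when the rule at $v$ is applied is at least $D$; by linearity this gives $\EE(|E^D_\pi|) \geq D|V|$. Write $d_v = \deg_G(v)$ and let $X_v$ denote the contribution of $v$ in the construction of $G_\pi^D$.

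First I would observe that, since $\pi$ is uniformly random, the relative order of $v$ and its $d_v$ neighbours is uniform, so $|\Npi(v)|$ is uniformly distributed on $\{0, 1, \ldots, d_v\}$ and, conditional on $|\Npi(v)| = j$, the set $\Npi(v)$ is a uniformly random $j$-subset of $N_G(v)$. Reading off the three rules defining $G_\pi^D$,
\[
\EE\big(X_v \,\big|\, |\Npi(v)| = j\big) = \begin{cases} j, & j \leq D, \\ D + (1 - p_j), & j \geq D+1, \end{cases}
\]
where $p_j$ denotes the probability that a uniformly random $j$-subset of $N_G(v)$ induces a clique in $G$.

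The heart of the argument is to estimate $p_j$ using \cref{lemma:combinatorial}. Let $H_1, \ldots, H_r$ denote the vertex sets of the maximal cliques of $G[N_G(v)]$; by hypothesis each $H_i$ is a proper subset of $N_G(v)$ and $|H_i \cap H_{i'}| \leq \ell - 2$ for $i \neq i'$. A $j$-subset of $N_G(v)$ induces a clique iff it lies in some $H_i$, so applying \cref{lemma:combinatorial} with $n = d_v$ and $m = j$ (valid for $\ell + 1 \leq j \leq d_v - 1$) yields $p_j \leq (d_v - j)/d_v$, hence $1 - p_j \geq j/d_v$; the bound holds trivially for $j = d_v$ since $N_G(v)$ is not a clique, while for $j \leq \ell$ I would simply use $1 - p_j \geq 0$.

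The last step is a direct computation. Expanding $\EE(X_v) = \frac{1}{d_v+1}\bigl[\sum_{j=0}^{D} j + \sum_{j=D+1}^{d_v}(D + 1 - p_j)\bigr]$ and cancelling, the inequality $\EE(X_v) \geq D$ reduces to $\sum_{j=D+1}^{d_v}(1-p_j) \geq \tfrac{D(D+1)}{2}$. Substituting the bound $1 - p_j \geq j/d_v$ (used from $j = \ell+1$ onwards) and summing the resulting arithmetic progression transforms this into the polynomial inequality $d_v^2 + d_v(1 - D(D+1)) - \ell(\ell+1) \geq 0$, which is monotone increasing in $d_v$ past its larger root and hence, given the hypothesized inequality at $k_0$, holds for all $d_v \geq k_0$.

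The main obstacle, inherited from the $t=1$ argument of \cite{soma}, is extracting the sharp estimate $p_j \leq (d_v-j)/d_v$ from \cref{lemma:combinatorial}; once this is in hand the remaining work is bookkeeping. The subtlety I would be most careful about is the interaction between $D$ and $\ell$ in the range of summation: the cleanest derivation of the quadratic inequality uses the regime $D \leq \ell$, which is the one relevant for the intended application to \cref{theorem:Kriesellimproved}.
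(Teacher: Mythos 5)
Your proposal is correct and follows essentially the same approach as the paper: condition on $j=\degpi(v)$ (which is uniform on $\{0,\ldots,\deg_G(v)\}$), apply \cref{lemma:combinatorial} to bound the probability that $\Npi(v)$ induces a clique by $(d_v-j)/d_v$, and reduce the inequality $\EE(\degGpi(v))\geq D$ to the stated quadratic in $\deg_G(v)$, which is monotone past its positive root. The one small observation you make that the paper leaves implicit is that the summation range requires $D\leq\ell$; this is indeed satisfied in both applications (the paper's Theorem~\ref{theorem:Kriesellimproved} with $D=d+1$, $\ell=d+\D+2$, and the original $\ell=D$ setting of \cite{soma}), and pointing it out is a worthwhile clarification but not a departure from the paper's argument.
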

\begin{proof}
    We essentially repeat the proof of \cite[Lemma 3.2]{soma}, which is the special case when $\ell = D$ and $k_0 = D(D+1)$. Fix $v \in V$, and let $k$ denote $\deg_G(v)$. 
    \cref{lemma:ordering} implies that
    \begin{align}\label{eq1}
        \EE(\min(\degpi(v),D)) = D - \frac{1}{2}\cdot \frac{D(D+1)}{k+1}.
    \end{align}  
Let $H_1,\ldots,H_r$ denote the vertex sets of the maximal cliques of $G[N_G(v)]$. For each $i \in \{D+1,\ldots,k\}$, let 
\begin{align*}\mathcal{S}_i &= \left\{ S \in \binom{N_G(v)}{i}: S \text{ induces a clique in } G \right\} \\ &= \left\{ S \in \binom{N_G(v)}{i}: \exists j \in \{1,\ldots,r\}, S \subseteq H_j \right\}.\end{align*}
Then $|\mathcal{S}_k| = 0$ and, by \cref{lemma:combinatorial}, $|\mathcal{S}_i| \leq \binom{k-1}{i}$ for each $i \in \{\ell + 1, \ldots, k-1\}$. 

Let $Q$ denote the event that $\degpi(v) \geq D+1$ and $\Npi(v)$ does not induce a clique in $G$. If $\degpi(v) = i \geq D+1$, then $Q$ occurs if and only if $\Npi(v) \notin \mathcal{S}_i$. Hence, for $i \geq \ell + 1$ we have
\[\PP(Q|\degpi(v) = i) = 1 - \frac{|\mathcal{S}_i|}{\binom{k}{i}} \geq 1 - \frac{\binom{k-1}{i}}{\binom{k}{i}} = 1 - \frac{k-i}{k} = \frac{i}{k}.\]It follows that
\begin{align}\label{eq2}
\begin{split}
    \PP(Q) &\geq \sum_{i = \ell + 1}^k \PP(\degpi(v)=i)\PP(Q|\degpi(v)=i) 
    \\ &\geq \sum_{i = \ell + 1}^k \frac{1}{k+1}\frac{i}{k} \\ &= \frac{1}{k(k+1)} \cdot \left( \binom{k+1}{2} - \binom{\ell+1}{2}\right) \\ &= \frac{1}{2} - \frac{1}{2} \cdot \frac{\ell(\ell+1)}{k(k+1)}.
\end{split}
\end{align}
If $Q$ does not occur, then $\degGpi(v) = \min(\degpi(v),D).$
\\ If $Q$ occurs, then $\degGpi(v) = D+1 = \min(\degpi(v),D) + 1.$
\\ Hence, by combining \cref{eq1} and \cref{eq2} we obtain
\begin{align}
\begin{split}
    \EE(\degGpi(v)) &= \EE(\min(\degpi(v),D)) + \PP(Q) \\ & \geq D + \frac{1}{2} - \frac{1}{2} \cdot \left(\frac{D(D+1)}{k+1} + \frac{\ell(\ell+1)}{k(k+1)}\right) \\ &\geq D,
\end{split}
\end{align}
where the last inequality follows from the assumption that \[k_0^2 + k_0(1-D(D+1)) - \ell(\ell+1) \geq 0.\] 

    Thus
    \[\EE(|E^D_\pi|) = \EE\left(\sum_{v \in V}\degGpi(v)\right) = \sum_{v \in V}\EE(\degGpi(v)) \geq D|V|.\]
\end{proof}

For the rest of the section we let $\graphicrigid(G)$ denote the union of $\mathcal{R}_d(G)$ and $\mathcal{R}_1(G)$ (i.e., the graphic matroid of $G$), for every graph $G$. Let $r_{\graphicrigid}(G)$ denote the rank of $\graphicrigid(G)$. We define $G = (V,E)$ to be \emph{$\graphicrigid$-independent} if $r_{\graphicrigid}(G) = |E|$,  and \emph{$\graphicrigid$-rigid} if $r_{\graphicrigid}(G) = r_{\graphicrigid}(K(V))$. A pair of vertices $u,v \in V$ is \emph{$\graphicrigid$-linked in $G$} if $r_{\graphicrigid}(G+uv) = r_{\graphicrigid}(G)$. 

Note that a graph is $\graphicrigid$-independent if and only if it can be written as the edge-disjoint union of an $\mathcal{R}_d$-independent graph and a forest. It follows that $\graphicrigid$-independence is preserved by the addition of vertices of degree $d+1$, as well as under the $(d+1)$-dimensional edge split operation.

\begin{lemma}(Adaptation of \cite[Lemma 3.1]{soma})\label{lemma:independent}
    Let $G = (V,E)$ be a graph, and let $\pi = (v_1,\ldots,v_n)$ be an ordering of the vertices of $G$. Suppose that every $\graphicrigid$-linked pair in $G$ is adjacent in $G$. Then $G_\pi^{d+1}$ is $\graphicrigid$-independent.
\end{lemma}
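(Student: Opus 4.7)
The plan is to prove the claim by induction on $i$, verifying that the subgraph $G_i$ of $G_\pi^{d+1}$ spanned by $\{v_1,\ldots,v_i\}$ is $\graphicrigid$-independent. Since $G_i$ arises from $G_{i-1}$ by adjoining $v_i$ together with the new edges prescribed by one of rules \textit{(a)}, \textit{(b)}, or \textit{(c)} in the construction of $G_\pi^{d+1}$, it suffices to show that each such step preserves $\graphicrigid$-independence.

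Rules \textit{(a)} and \textit{(b)} are straightforward: in both cases $v_i$ receives at most $d+1$ new edges. Fixing any decomposition $E(G_{i-1}) = R \cup T$ with $R$ being $\mathcal{R}_d$-independent and $T$ a forest, we can route at most $d$ of the new edges through $R$ as a vertex addition of degree at most $d$---preserving $\mathcal{R}_d$-independence by property \textit{(c)}---and the remaining at most one edge through $T$ as a pendant edge at the new vertex $v_i$, which preserves the forest property. So $G_i$ is $\graphicrigid$-independent in these cases.

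The main obstacle is rule \textit{(c)}, in which $v_i$ has $d+2$ new neighbours including two non-adjacent vertices $x,y$. The key observation is that $\{x,y\}$ cannot be $\graphicrigid$-linked in $G_{i-1}$: otherwise, by monotonicity of the matroid closure operator (and since $E(G_{i-1}) \subseteq E(G)$), $\{x,y\}$ would also be $\graphicrigid$-linked in $G$, forcing $xy \in E(G)$ by the hypothesis, a contradiction. Hence $G_{i-1} + xy$ is $\graphicrigid$-independent, and we may fix a witnessing decomposition $E(G_{i-1}) + xy = R \cup T$. If $xy \in R$, we apply a $d$-dimensional edge split on $xy$ using the new vertex $v_i$, joining it to $x$, $y$, and $d-1$ further new neighbours of $v_i$; this preserves $\mathcal{R}_d$-independence by property \textit{(f)}, and the one remaining new neighbour of $v_i$ is absorbed into $T$ as a pendant edge. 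If instead $xy \in T$, then $(T - xy) + v_i x + v_i y$ is still a forest (we merely subdivide the tree edge $xy$ using the new vertex $v_i$), and the remaining $d$ new edges incident to $v_i$ can be added to $R$ as a degree-$d$ vertex addition. In either sub-case we obtain a decomposition of $E(G_i)$ into an $\mathcal{R}_d$-independent set and a forest, completing the induction.
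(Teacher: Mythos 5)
Your proof is correct and follows essentially the same inductive argument as the paper's: induct on the vertex ordering, handle rules \textit{(a)}/\textit{(b)} by a degree-$(d+1)$ vertex addition, and handle rule \textit{(c)} by first showing $\{x,y\}$ is not $\graphicrigid$-linked in $F_{i-1}$ so that $F_{i-1}+xy$ is $\graphicrigid$-independent, and then performing a $(d+1)$-dimensional edge split on $xy$. The only difference is stylistic: the paper cites as a previously noted property that degree-$(d+1)$ vertex additions and $(d+1)$-dimensional edge splits preserve $\graphicrigid$-independence, whereas you verify this inline by case analysis on whether the relevant edge lies in the $\mathcal{R}_d$-part or the forest part of the witnessing partition.
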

\begin{proof}
    We prove that $F_i = G^{d+1}_\pi[\{v_1,\ldots,v_i\}]$ is $\graphicrigid$-independent by induction on $i$. $F_1$ is a single vertex, which is $\graphicrigid$-independent. Let us thus suppose that $2 \leq i \leq n$. If $F_i$ is constructed from $F_{i-1}$ according to rule \textit{(a)} or \textit{(b)}, then it is obtained from $F_{i-1}$ by the addition of a vertex of degree at most $d+1$, and hence $F_i$ is $\graphicrigid$-independent. Suppose that $F_i$ is constructed from $F_{i-1}$ according to rule \textit{(c)}, and let $x,y$ be vertices as described in the rule. As $x,y$ are nonadjacent in $G$, they are not $\graphicrigid$-linked in $G$, and hence neither in $F_{i-1}$. This means that $F_{i-1}+xy$ is $\graphicrigid$-independent. Since $F_i$ is obtained from $F_{i-1}+xy$ by a $(d+1)$-dimensional edge split, it follows that $F_i$ is also $\graphicrigid$-independent, as claimed.
\end{proof}

We shall also need an analogue of \cref{lemma:tdrigid} for $\graphicrigid$-rigid graphs.

\begin{figure}[t]
    \centering
    \begin{tikzpicture}[x = 1cm, y = 1cm, scale = 1]

       \node (u4) at (-2,0){};
       \node (u1) at (0,1){};
       \node (u2) at (0,0){};
       \node (u3) at (0,-1){};
       \node (v1) at (2,1.8){};
       \node (v2) at (2,1.05){};
       \node (v3) at (2,.325){};
       \node (v4) at (2,-.325){};
       \node (v5) at (2,-1.05){};
       \node (v6) at (2,-1.8){};
       \node (v7) at (4,0){};

       \draw [line width=\normaledge,color=edgeblack] (v1.center) -- (u1.center) -- (u4.center) -- (u2.center) -- (v2.center) -- (u2.center) -- (v3.center);

       \draw [line width=\normaledge,color=edgeblack] (u4.center) -- (u3.center) -- (v4.center) -- (u3.center) -- (v5.center) -- (u3.center) -- (v6.center);

       \draw [line width=\normaledge,color=edgeblack] (u4.center) to[bend left=90, looseness=1.5] (v7.center);

       \draw [fill=vertexblack] (u4) circle (3pt) node[left=3pt]{$u_4$};
       \draw [fill=vertexblack] (u1) circle (3pt) node[above left=0pt]{$u_1$};
       \draw [fill=vertexblack] (u2) circle (3pt) node[above left=0pt]{$u_2$};
       \draw [fill=vertexblack] (u3) circle (3pt) node[below left=0pt]{$u_3$};
       \draw [fill=vertexblack] (v1) circle (3pt) node[right=3pt]{$v_1$};
       \draw [fill=vertexblack] (v2) circle (3pt) node[right=3pt]{$v_2$};
       \draw [fill=vertexblack] (v3) circle (3pt) node[right=3pt]{$v_3$};
       \draw [fill=vertexblack] (v4) circle (3pt) node[right=3pt]{$v_4$};
       \draw [fill=vertexblack] (v5) circle (3pt) node[right=3pt]{$v_5$};
       \draw [fill=vertexblack] (v6) circle (3pt) node[right=3pt]{$v_6$};
       \draw [fill=vertexblack] (v7) circle (3pt) node[right=3pt]{$v_7$};

   \end{tikzpicture}
   \caption{The construction of $T$ in \cref{lemma:graphicrigid} in the case when $\D = 3$ and $d=6$. The complement of $T$ can be obtained from a complete graph $K_{d+1}$ on $d+1$ vertices by successively adding vertices of degree $d$.}
   \label{figure:spanningtree}
\end{figure}

\begin{lemma}\label{lemma:graphicrigid}
    Let $\D$ be the smallest integer for which $\binom{\D+1}{2} \geq d$. If $n \geq d + \D + 2$, then $r_{\graphicrigid}(K_n) = (d+1)n - \binom{d+1}{2} - 1$. Hence an $\graphicrigid$-rigid graph on at least $d+\D+2$ vertices contains edge-disjoint spanning subgraphs $G_0$ and $T$ such that $G_0$ is $d$-rigid and $T$ is a tree.
\end{lemma}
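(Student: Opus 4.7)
My plan is to reduce the lemma to a construction question for $K_n$ via the matroid union, and to settle that by induction on $n$ with an explicit base case.

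By the definition of matroid union, $r_{\graphicrigid}(K(V)) \leq r_d(K(V)) + r_1(K(V))$, with equality if and only if $K(V)$ contains edge-disjoint bases of $\mathcal{R}_d$ and $\mathcal{R}_1$, that is, a minimally $d$-rigid spanning subgraph and a spanning tree. Granted this rank equality, the lemma follows immediately: every $\graphicrigid$-rigid graph $G$ contains a maximum $\graphicrigid$-independent set $B$ of size $r_d(K(V)) + r_1(K(V))$, and writing $B = B_1 \cup B_2$ with $B_i$ independent in $\mathcal{R}_i$ (as permitted by the definition of $\graphicrigid$), the size of $B$ forces both $B_1 \cap B_2 = \varnothing$ and $|B_i| = r_i(K(V))$, so $G_0 := B_1$ is a minimally $d$-rigid spanning subgraph of $G$ and $T := B_2$ is a spanning tree of $G$, edge-disjoint.

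It thus suffices to verify the rank equality for $K_n$ with $n \geq d + \D + 2$ by exhibiting edge-disjoint bases. I would proceed by induction on $n$. The inductive step is routine: given a decomposition of $K_{n-1}$ into $G_0'$ (minimally $d$-rigid) and $T'$ (spanning tree), pick $d + 1$ distinct vertices $u_1, \ldots, u_{d+1} \in V(K_{n-1})$ (feasible as $n - 1 \geq d + 1$), and define $G_0 := G_0' + v_n + \{v_n u_1, \ldots, v_n u_d\}$ (a $0$-extension preserving minimal $d$-rigidity by property \textit{(c)}) together with $T := T' + v_n u_{d+1}$ (a spanning tree with a new leaf).

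The main obstacle is the base case $n = d + \D + 2$. Partition $V$ into a ``hub'' $H = \{v_1, \ldots, v_{d+1}\}$ and ``spokes'' $S = \{v_{d+2}, \ldots, v_n\}$ with $|S| = \D + 1$, and take $G_0$ to be $K(H)$ together with each spoke $v_{d+1+i}$ (for $i = 1, \ldots, \D + 1$) attached as a $0$-extension of degree $d$. The only freedom is the $i$-element ``miss set'' $M_i \subseteq \{v_1, \ldots, v_{d+i}\}$ complementary to the $G_0$-neighborhood of $v_{d+1+i}$. I would choose each $M_i$ so that $v_1 \in M_i$, and so that the remaining free slots, totalling $\sum_{i=1}^{\D+1}(i - 1) = \binom{\D+1}{2} \geq d$, cover all $d$ elements of $H \setminus \{v_1\}$; this is feasible by the defining inequality of $\D$. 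Then $G_0$ is minimally $d$-rigid by construction (starting from the $d$-rigid $K(H)$ and applying $\D + 1$ vertex additions of degree $d$), while in the complement $K_n \setminus E(G_0)$ the vertex $v_1$ is adjacent to every spoke and every other hub vertex is adjacent to at least one spoke, so $K_n \setminus E(G_0)$ is connected through $v_1$ and contains the desired spanning tree $T$.
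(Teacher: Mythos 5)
Your proposal is correct and follows essentially the same strategy as the paper: reduce to $K_n$ via the matroid-union rank identity, settle the base case $n = d+\D+2$ by an explicit partition of $K_n$ into a $(d+1)$-vertex ``hub'' and $\D+1$ ``spokes'', and extend to larger $n$ by adding vertices of degree $d+1$. The only difference is cosmetic: you build the minimally $d$-rigid subgraph $G_0$ first via $0$-extensions and locate the tree inside its complement, whereas the paper specifies the spanning tree $T$ first and checks that its complement is $d$-rigid by vertex additions.
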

\begin{proof}
    We show that $K_n$ contains edge-disjoint spanning subgraphs $G_0$ and $T$ such that $G_0$ is $d$-rigid and $T$ is a tree. We first assume $n = d+\D+2$. Let us label the vertices of $K_n$ as $\{u_1,\ldots,u_{\D+1},v_1,\ldots,v_{d+1}\}$, and let us define the integers $t_0 = 0$, $t_i = \binom{i+1}{2}$ for $i \in \{1,\ldots,\D-1\}$, and $t_{\D} = d$.

    We construct a spanning subgraph $T$ of $K_n$ by adding an edge between $u_i$ and $v_j$ for each $i \in \{1,\ldots,\D\}$ and $j \in \{t_{i-1} + 1,\ldots,t_i\}$, and then adding the edges $v_{d+1}u_{\D+1}$ and $u_iu_{\D+1}$ for each $i \in \{1,\ldots,\D\}$. See \cref{figure:spanningtree}. It is easy to verify that $T$ is a spanning tree of $K_n$. Note that for $i \in \{1,\ldots,\D-1\}$, $u_i$ has exactly $i$ neighbors in $T$ among $v_1,\ldots,v_d$, while $u_\D$ has at most $\D$ such neighbors.

    Let $G_0$ be the complement of $T$ in $K_n$. Then $G_0$ contains the complete graph on $\{v_{1},\ldots,v_{d+1}\}$. Moreover, $u_1$ is adjacent with $\{v_2,\ldots,v_{d+1}\}$ and similarly, for each $i \in \{2,\ldots,\D+1\}$ the vertex $u_i$ has at least $d$ neighbors among $\{v_1,\ldots,v_{d+1},u_1,\ldots,u_{i-1}\}$. It follows that $G_0$ can be constructed from a complete graph by the addition of vertices of degree $d$, and hence it is $d$-rigid, as required.

    The $n > d+\D+2$ case follows from the observation that for such $n$, $K_n$ can be constructed from $K_{d+\D+2}$ by the addition of vertices of degree at least $d+1$.
\end{proof}

\begin{lemma}\label{lemma:graphicrigidgluing}
    Let $\D$ be the smallest integer for which $\binom{\D+1}{2} \geq d$.  If $G_1$ and $G_2$ are complete graphs with $|V(G_1) \cap V(G_2)| \geq d + \D + 2$, then $G_1 \cup G_2$ is $\graphicrigid$-rigid. 
\end{lemma}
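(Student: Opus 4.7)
The plan is to exhibit explicitly edge-disjoint spanning subgraphs $G_0$ and $T$ of $G_1\cup G_2$ such that $G_0$ is $d$-rigid and $T$ is a spanning tree. By matroid union this will give an $\mathcal{M}_d$-independent set of size
\[d|V|-\binom{d+1}{2}+|V|-1,\]
where $V=V(G_1)\cup V(G_2)$. Since $|V|\geq|W|\geq d+\D+2$, a second application of \cref{lemma:graphicrigid} to $K(V)$ shows that this is exactly the rank of $K(V)$ in $\mathcal{M}_d$, so the $\mathcal{M}_d$-rigidity of $G_1\cup G_2$ follows.

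Write $W=V(G_1)\cap V(G_2)$. First I would apply \cref{lemma:graphicrigid} inside $K(W)$, using $|W|\geq d+\D+2$, to obtain edge-disjoint spanning subgraphs $G_W^0$ and $T_W$ of $K(W)$ with $G_W^0$ being $d$-rigid and $T_W$ a spanning tree of $W$. Then I would extend $G_W^0$ to a $d$-rigid spanning subgraph $G_0$ of $G_1\cup G_2$ by processing the vertices of $V\setminus W$ one at a time, attaching each $u\in V(G_i)\setminus W$ by exactly $d$ edges to $W$, all chosen inside the complete graph $G_i$. This is possible because $|W|\geq d$ and $u$ is adjacent to every vertex of $W$ inside $G_i$; property \textit{(c)} of the rigidity matroid guarantees that $d$-rigidity is preserved at each step.

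Next I would extend $T_W$ to a spanning tree $T$ of $V$ by attaching each $u\in V(G_i)\setminus W$ via a single further edge to some vertex of $W$ lying in $G_i$ but \emph{not} already used in $G_0$. Since $u$ has $|W|$ potential edges to $W$ inside $G_i$ and only $d$ of them were spent on $G_0$, at least $|W|-d\geq \D+2$ remain, so such a choice exists. The result is a spanning tree of $V$, since $T_W$ spans $W$ and every $u\in V\setminus W$ is attached by a single edge to it.

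By construction $G_0$ and $T$ are edge-disjoint subgraphs of $G_1\cup G_2$ with $G_0$ $d$-rigid and $T$ a spanning tree, so the rank calculation from the first paragraph completes the proof. I do not anticipate any serious obstacle: the only delicate point is the edge-disjointness bookkeeping between the degree-$d$ attachments in $G_0$ and the single attachments in $T$, which is comfortably guaranteed by the slack $|W|\geq d+\D+2>d+1$.
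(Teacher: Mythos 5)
Your proof is correct and follows essentially the same route as the paper: apply \cref{lemma:graphicrigid} to the intersection $W = V(G_1)\cap V(G_2)$, then extend to all of $V$ by attaching each vertex of $V\setminus W$ via $d+1$ edges into $W$ (all available inside the relevant $G_i$), using $d$ of them for the $d$-rigid part and one for the tree. The paper's proof only sketches this vertex-attachment step; you spell it out in full, but the construction and the rank count behind it are the same.
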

\begin{proof}
    Let $G = G_1 \cup G_2$. By \cref{lemma:graphicrigid}, $G_1 \cap G_2$ contains edge-disjoint copies of a spanning tree and $d$-rigid spanning subgraph. We can use these subgraphs and the fact that each vertex of $V(G) - (V(G_1) \cap V(G_2))$ has at least $d+1$ neighbors in $V(G_1) \cap V(G_2)$ to construct edge-disjoint copies of a spanning tree and a $d$-rigid spanning subgraph in $G$.
\end{proof}

\begin{proof}[Proof of \cref{theorem:Kriesellimproved}]Since in the $d \in \{1,2\}$ case we have stronger bounds from the theorem of Nash-Williams \cite{nash-williams_1961} and Tutte \cite{tutte_1961}, and \cref{theorem:jordan}, respectively, we may suppose that $d \geq 3$. (The proof also works for $d \in \{1,2\}$, but the bound we obtain is slightly weaker.) Let $c = d^2+3d+5$. 
    
Suppose, for a contradiction, that $G = (V,E)$ is a $c$-connected graph that is not $\graphicrigid$-rigid. We may assume that $G$ has the least possible number of vertices among all such graphs. We may also assume that $G$ has the largest number of edges among all such graphs on $|V|$ vertices. Then, for each $v \in V, N_G(v)$ does not induce a clique in $G$, for otherwise deleting $v$ would result in a smaller counterexample. (As we noted before, deleting a vertex whose neighbor set is a clique preserves $k$-connectivity unless the graph is a complete graph on $k+1$ vertices.) Furthermore, every $\graphicrigid$-linked pair is adjacent in $G$, for otherwise connecting a nonadjacent $\graphicrigid$-linked pair by an edge would result in a counterexample with more edges. In particular, the $\graphicrigid$-rigid induced subgraphs of $G$ are complete. 

Let $\ell = d + \D + 2$ where \[\D = \left\lceil \sqrt{2d + \frac{1}{4}} -  \frac{1}{2}\right\rceil\]A short calculation shows that $\binom{\D+1}{2} \geq d$. Consider a vertex $v \in V$, and let $H_1,H_2$ be the vertex sets of two different maximal cliques of $G[N_G(v)]$. Then $G[H_1 \cup H_2 \cup \{v\}]$ is non-complete and hence not $\graphicrigid$-rigid. It follows from \cref{lemma:graphicrigidgluing} that \[|(H_1 \cup \{v\}) \cap (H_2 \cup \{v\})| \leq \ell - 1,\] and thus $|H_1 \cap H_2| \leq \ell-2$

Hence we may apply \cref{lemma:main} with $D = d+1, \ell = d + \D + 2$, and $k_0 = d^2 + 3d + 5$. (It is a straightforward, although tedious, calculation to check that these numbers satisfy the condition in the statement of \cref{lemma:main}, under the condition that $d \geq 3$.) It follows that if $\pi$ is a uniformly random ordering of $V$, then $\EE(|E^{d+1}_\pi|) \geq (d+1)|V|$. This implies that there exists some ordering $\pi_0$ of $V$ for which $\EE(|E^{d+1}_{\pi_0}|) \geq (d+1)|V|$. Moreover, by \cref{lemma:independent},  $G_{\pi_0}^{d+1}$ is $\graphicrigid$-independent. But this is impossible, since an $\graphicrigid$-independent graph on at least $d$ vertices can have at most $(d+1)|V|-\binom{d+1}{2} - 1$ edges.

Hence every $c$-connected graph is $\graphicrigid$-rigid. Combining this with \cref{lemma:graphicrigid} (using the observation that $c + 1\geq d+\D+2$), we deduce that every $c$-connected graph contains edge-disjoint copies of a spanning tree and a $d$-rigid spanning subgraph, as required.
\end{proof}

\section{Concluding remarks}\label{sec:concluding}

As we noted in the introduction, we believe that the bound in \cref{theorem:main1} can be replaced by $t \cdot d(d+1)$. The following lemma shows that this would be best possible. For full generality, we state it for arbitrary unions of rigidity matroids.

\begin{lemma}\label{lemma:tightexample}
    Let $d_1,\ldots,d_k$ be a collection of positive integers and define \[K = \left(\sum_{i = 1}^k d_i(d_i+1)\right) - 1.\] There exist infinitely many $K$-connected graphs $G$ that do not contain edge-disjoint spanning subgraphs $G_1,\ldots,G_k$ such that $G_i$ is $d_i$-rigid for each $i \in \{1,\ldots,k\}$. 
\end{lemma}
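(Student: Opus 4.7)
The plan is to exhibit two explicit infinite families of $K$-connected graphs, split according to whether at least one $d_j \geq 2$ or all $d_i = 1$.

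Suppose first, without loss of generality, that $d_1 \geq 2$. For each integer $n \geq 2K$, I would construct $G_n$ on vertex set $S \sqcup U$ with $|S| = K$ and $|U| = n - K$ as follows: let $G_n[S]$ be any graph on $S$ with exactly $d_1 K - \binom{d_1+1}{2} - 1$ edges, let $U$ be independent, and join every vertex of $U$ to every vertex of $S$. The requirement $d_1 K - \binom{d_1+1}{2} - 1 \leq \binom{K}{2}$ reduces to $(K - d_1)(K - d_1 - 1) \geq 0$, which follows from $K \geq d_1 + 1$ (in turn from $K \geq d_1(d_1+1) - 1$ and $d_1 \geq 2$), so the construction is feasible. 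A short case analysis confirms that $G_n$ is $K$-connected: every vertex has degree at least $K$, and for any $T \subseteq V(G_n)$ with $|T| = K - 1$ both $S \setminus T$ and $U \setminus T$ are nonempty, so the complete bipartite edges between them keep $G_n - T$ connected.

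To see that $G_n$ contains no $d_1$-rigid spanning subgraph (and therefore no packing of the required form), suppose for contradiction that $G' \subseteq G_n$ were such a subgraph. Since $d_1 \geq 2$, the $\mathcal{R}_{d_1}$-rank of the star of edges incident to any single vertex is at most $d_1$, which gives the inequality $r_{d_1}(G') - r_{d_1}(G' - u) \leq d_1$ for every $u$. Applied iteratively to the vertices of $U$---each of which satisfies $\deg_{G'}(u) \geq d_1$ by the $d_1$-connectivity of $G'$---this forces $G'[S]$ itself to be $d_1$-rigid on the $K$ vertices of $S$, so $|E(G'[S])| \geq d_1 K - \binom{d_1+1}{2}$. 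But $G'[S] \subseteq G_n[S]$, contradicting the choice of $G_n[S]$. Varying $n$ produces infinitely many counterexamples.

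It remains to handle the case where all $d_i = 1$, in which $K = 2k - 1$ and the required packing reduces to a collection of $k$ edge-disjoint spanning trees. For each sufficiently large $n$ with $Kn$ even, I take $G_n$ to be any $K$-regular $K$-connected graph on $n$ vertices; such graphs exist for infinitely many $n$ by standard constructions. Then $|E(G_n)| = (2k-1)n/2 < k(n-1)$ whenever $n > 2k$, violating the edge-count condition of the Nash-Williams/Tutte theorem. The main obstacle in the plan is precisely this asymmetry between $d \geq 2$ and $d = 1$: the star-rank bound driving the vertex-deletion step holds only for $d \geq 2$ and fails in the graphic matroid, so the all-$d_i = 1$ case must be treated separately via the global edge count above.
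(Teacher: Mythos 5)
Your proposal is incorrect in the main case (some $d_j \geq 2$); the flaw is both in the justification and in the construction itself.

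The claim ``the $\mathcal{R}_{d_1}$-rank of the star of edges incident to any single vertex is at most $d_1$, which gives $r_{d_1}(G') - r_{d_1}(G' - u) \leq d_1$'' is false. A star $K_{1,m}$ is a tree and hence $\mathcal{R}_d$-independent for every $d \geq 1$, so its rank is $m$, not $\min(m,d_1)$. Correspondingly, deleting a vertex of large degree from a $d$-rigid graph can drop the rank by much more than $d$: for the wheel $W_n$ in $d=2$, $r_2(W_n)=2n-1$ but $r_2(W_n-\mathrm{hub})=r_2(C_n)=n$, a drop of $n-1$. So the iterative-deletion step that would force $G'[S]$ to be $d_1$-rigid does not go through.

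Worse, the construction itself produces graphs that \emph{do} contain a $d_1$-rigid spanning subgraph, so no repair of the counting can save it. Take $d_1 = 2$, $k = 1$, so $K = 5$, $|S| = 5$. Your $G_n$ contains the complete bipartite graph $K_{5,\,n-5}$ as a spanning subgraph. But $K_{5,m}$ is $2$-rigid for $m \geq 3$: $K_{3,3}$ is generically $2$-rigid (it satisfies the Laman count on every subset and has $2\cdot 6 - 3 = 9$ edges), and the remaining vertices can be added one at a time, each with degree at least $3 > 2$, which preserves $2$-rigidity. Hence $G_n$ is $2$-rigid and is not a counterexample. The same phenomenon occurs for larger $d_1$: with $|S|$ of size only $K$ and $U$ joined completely to $S$, the bipartite part is highly rigid. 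The reason the paper instead uses the Lovász--Yemini construction (a $K$-regular $K$-connected graph with each vertex blown up into a $K$-clique of degree-one vertices) is precisely to avoid this: in that construction the cliques contribute a bounded amount to the rank, while the original $K$-regular edges contribute at most one each, so the total rank is provably too small. Your all-$d_i = 1$ case (edge-count against $k$ spanning trees in a $(2k-1)$-regular graph) is fine, but the main case needs the Lovász--Yemini-style blob construction, not a complete bipartite one.
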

\begin{proof}
    The proof follows the construction of Lovász and Yemini \cite{lovasz.yemini_1982} for $5$-connected graphs that are not $2$-rigid. Let $G = (V_0,E_0)$ be a $K$-regular $K$-connected graph on $2s$ vertices, where $s$ is a large integer to be determined later. Let $G = (V,E)$ be the graph obtained from $G'$ by splitting every vertex into $K$ vertices of degree one, and then adding a complete graph $G_v$ on the $K$ vertices corresponding to $v$, for each $v \in V_0$. It is not difficult to verify that $G$ is $K$-connected. 
    
    Let $\mathcal{M} = (E,r)$ denote the union of $\mathcal{R}_{d_i}(G)$ for $i \in \{1,\ldots,k\}$. Since we have $E(G) = E_0 \cup \bigcup_{v \in V_0} E(G_v)$, we obtain
    \begin{align*}
        r(E) \leq |E_0| + \sum_{v \in V}r(E(G_v)) &\leq {sK} + 2s \sum_{i = 1}^k \left(  d_iK - \binom{d_i+1}{2} \right) \\ &= 2sK\left( \Big(\sum_{i=1}^k d_i \Big) + \frac{1}{2} - \frac{1}{2} \cdot \frac{K+1}{K} \right) \\ &= |V|\left( \Big(\sum_{i=1}^k d_i \Big) + \frac{1}{2} - \frac{1}{2} \cdot \frac{K+1}{K} \right).
    \end{align*}
    If $s$ is sufficiently large, then the right-hand side is less than $\sum_{i=1}^k \left(d_i|V| - \binom{d_i+1}{2}\right)$, and hence $G$ cannot contain edge-disjoint $d_i$-rigid spanning subgraphs for $i \in \{1,\ldots,k\}$. 
\end{proof}

Finally, we briefly consider the algorithmic aspects of our results. \cref{theorem:main1} is equivalent to the statement that if a graph $G$ is sufficiently highly connected, then there exist $t$ disjoint bases of the generic $d$-dimensional rigidity matroid $\mathcal{R}_d(G)$. It is known that one can construct a random matrix that is a linear representation of the ($t$-fold union of) $\mathcal{R}_d(G)$ with high probability. We can use this fact 
and one of the several polynomial-time algorithms for matroid partition to obtain a randomized algorithm that finds, in expected polynomial time, a packing of $t$ edge-disjoint $d$-rigid spanning subgraphs in graphs satisfying the condition of \cref{theorem:main1}. A similar approach can be used in the case of \cref{theorem:Kriesellimproved} to find a packing of a $d$-rigid spanning subgraph and a spanning tree in suitably highly connected graphs.

We note that our proof of \cref{theorem:main2} is algorithmic in the sense that, given a packing of two $(4k-4)$-rigid spanning subgraphs in a graph, it can be used to explicitly construct a $k$-connected orientation of the graph. Combined with the ideas outlined in the previous paragraph, we obtain a randomized polynomial time algorithm for finding a $k$-connected orientation of a sufficiently highly connected graph. 

\section*{Acknowledgements}
This research was supported by the Hungarian Scientific Research Fund provided by the National Research, Development and Innovation Office, grant Nos. K135421 and PD138102. The second author was supported in part by the MTA-ELTE Momentum Matroid Optimization Research Group and the National Research, Development and Innovation Fund of Hungary, financed under the ELTE TKP 2021‐NKTA‐62 funding scheme. The last author was supported by the Rényi Doctoral Fellowship of the Rényi Institute.

\printbibliography

\end{document}